\definecolor{citecolour}{rgb}{0.0, 0.0, 0.8}
\colorlet{linkcolour}{green!50!black}
\newtheorem{theorem}{Theorem}
\newtheorem{lemma}[theorem]{Lemma}
\newtheorem{proposition}[theorem]{Proposition}
\newtheorem{corollary}[theorem]{Corollary}
\theoremstyle{definition}
\newtheorem{definition}[theorem]{Definition}
\newtheorem{remark}[theorem]{Remark}
\numberwithin{theorem}{section}
\numberwithin{equation}{section}
\numberwithin{table}{section}
\theoremstyle:=definition,remark,plain\do{%
        \expandafter\g@addto@macro\csname th@\theoremstyle\endcsname{%
            \addtolength\thm@preskip\parskip
            }%
        }
\DeclareMathOperator{\A}{\mathfrak{A}}
\DeclareMathOperator{\X}{\mathfrak{X}}
\DeclareMathOperator{\Y}{\mathfrak{Y}}
\DeclareMathOperator{\Zent}{\mathbf{Z}}
\DeclareMathOperator{\Out}{\mathbf{Out}}
\DeclareMathOperator{\Aut}{\mathbf{Aut}}
\DeclareMathOperator{\PSL}{PSL}
\DeclareMathOperator{\Sz}{Sz}
\DeclareMathOperator{\cent}{\mathbf{C}}
\DeclareMathOperator{\fitt}{\mathbf{F}}
\DeclareMathOperator{\fratt}{\mathbf{\Phi}}
\DeclareMathOperator{\Soc}{\mathbf{Soc}}
\newcommand{\R}{\mathbb{\R}}
\newcommand{\E}{\mathfrak{E}}
\renewcommand{\leq}{\leqslant}
\renewcommand{\geq}{\geqslant}
\begin{document}
\title{On residuals of finite groups}
\author[S. Aivazidis and 
T.\,W. M\"uller]{Stefanos Aivazidis$^{\dagger}$ and
Thomas M\"uller$^*$} 

\address{$^{\dagger*}$Einstein Institute of Mathematics, Edmond J. Safra Campus, The Hebrew University of Jerusalem, Givat Ram, Jerusalem 9190401, Israel.}

\address{$^*$School of Mathematical Sciences, Queen Mary
\& Westfield College, University of London,
Mile End Road, London E1 4NS, United Kingdom.}


\subjclass[2010]{20D10 (20B05)}

\keywords{Classes of groups, Formation, Fitting class}

\begin{abstract}
A theorem of Dolfi, Herzog, Kaplan, and Lev \cite[Thm.~C]{DHKL} asserts that in a finite group with trivial Fitting subgroup, the size of the soluble residual of the group is bounded from below by a certain power of the group order, and that the inequality is sharp. Inspired by this result and some of the arguments in \cite{DHKL}, we establish the following generalisation: if $\X$ is a subgroup-closed Fitting formation of full characteristic which does not contain all finite groups and $\overline{\X}$ is the extension-closure of $\X$, then there exists an (optimal) constant $\gamma$ depending only on $\X$ such that,
for all non-trivial finite groups $G$ with trivial $\X$-radical,
\begin{equation}
\left\lvert G^{\overline{\X}}\right\rvert \,>\, \vert G\vert^\gamma,
\end{equation}
where $G^{\overline{\X}}$ is the ${\overline{\X}}$-residual of $G$. When $\X = \mathfrak{N}$, the class of finite nilpotent groups, it follows
that $\overline{\X} = \mathfrak{S}$, the class of finite soluble groups, thus we recover the original theorem of Dolfi, Herzog, Kaplan, and Lev. In the last section of our paper, building on J.\,G. Thompson's classification of minimal simple groups, we exhibit a family of subgroup-closed Fitting formations $\X$ of full characteristic such that $\mathfrak{S} \subset \overline{\X} \subset  \mathfrak{E}$, thus providing applications of our main result beyond the reach of \cite[Thm.~C]{DHKL}.
\end{abstract}

\maketitle

\section{Introduction}

In an important paper, dedicated to Avinoam Mann on the occasion of his retirement, Dolfi, Herzog, Kaplan, and Lev prove the following remarkable result.

\begin{theorem}[{\cite[Thm.~C]{DHKL}}]\label{Thm:SolRes}
Let $G$ be a non-trivial finite group such that $\fitt(G) =  1$. Then we have
\begin{equation}
\label{Eq:SolResEst}
\left\lvert G^\mathfrak{S}\right\rvert \,>\, \vert G\vert^\gamma,
\end{equation}
where 
\[\gamma = \log(60)/\log(120 (24)^{1/3}) \approx 0.700265861
\]
is largest possible for \eqref{Eq:SolResEst} to hold.
\end{theorem}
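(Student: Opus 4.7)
The plan is to exploit the structure forced by $\fitt(G)=1$: the generalised Fitting subgroup then coincides with the socle, so
\[
\Soc(G) \,=\, T_1 \times \cdots \times T_r
\]
is a direct product of non-abelian finite simple groups, $\cent_G(\Soc(G))=1$, and $G$ embeds in $\Aut(\Soc(G))$. Grouping isomorphic simple factors, $\Aut(\Soc(G)) \cong \prod_T (\Aut(T) \wr S_{n_T})$, where $T$ ranges over isomorphism types and $n_T$ counts the multiplicity. Moreover $\Soc(G) \leq G^{\mathfrak{S}}$, and by Schreier's conjecture (a consequence of CFSG) every $\Out(T)$ is soluble, so the only non-soluble contribution to $G^{\mathfrak{S}}/\Soc(G)$ comes from the $\mathfrak{S}$-residual of the permutation action of $G$ on the factors within each isotypic block.

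Setting $q(G) := \log|G^{\mathfrak{S}}|/\log|G|$, the goal is $q(G)>\gamma$. I would induct on $|G|$ and split the argument in two phases. First, reduce to the \emph{monolithic} case $\Soc(G) = T^n$ for a single simple type $T$: if instead $\Soc(G) = S_1 \times S_2$ with $S_1, S_2$ coprime non-trivial isotypic parts, then the quotients $G/\cent_G(S_i)$ are proper with trivial Fitting subgroup, and the inductive hypothesis combined with the subdirect embedding $G \hookrightarrow G/\cent_G(S_1) \times G/\cent_G(S_2)$ (and $\Soc(G) \leq G^{\mathfrak{S}}$) yields $q(G)>\gamma$. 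Second, in the monolithic case, write $P \leq S_n$ for the image of $G/\Soc(G) \leq \Out(T) \wr S_n$ under the natural projection; a direct verification gives
\[
|G| \,\leq\, |T|^n \, |\Out(T)|^n \, |P|, \qquad |G^{\mathfrak{S}}| \,\geq\, |T|^n \, |P^{\mathfrak{S}}|,
\]
so, setting $t=\log|T|$ and $o=\log|\Out(T)|$, the desired inequality reduces to
\[
n\bigl[(1-\gamma)t - \gamma o\bigr] \,>\, \gamma \log|P| - \log|P^{\mathfrak{S}}|.
\]

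The right-hand side above is maximised when $P$ is itself soluble, in which case it equals $\gamma \log|P|$. For $T=A_5$, short manipulation shows the inequality becomes $|P|<24^{n/3}$, which holds strictly for every soluble $P \leq S_n$ by Dixon's theorem on soluble subgroups of symmetric groups: $|P| \leq 24^{(n-1)/3}$. For any other non-abelian finite simple $T$, a CFSG-based case analysis verifies that $(1-\gamma)\log|T|-\gamma\log|\Out(T)|$ comfortably exceeds the critical value, so the bound holds with room to spare.

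Sharpness of $\gamma$ is witnessed by the iterated wreath-product sequence $W_1=S_5$, $W_{m+1}=W_m \wr S_4$: each $W_m$ has $\fitt(W_m)=1$, and a direct computation gives $|W_m| = 120^{4^{m-1}} \cdot 24^{(4^{m-1}-1)/3}$ and $|W_m^{\mathfrak{S}}| = 60^{4^{m-1}}$, whence
\[
q(W_m) \,\longrightarrow\, \frac{\log 60}{\log(120 \cdot 24^{1/3})} \,=\, \gamma \qquad (m\to\infty),
\]
while remaining strictly greater than $\gamma$ for each finite $m$. The principal obstacle is the CFSG-based case analysis underpinning the minimisation above: one must verify, across all families of non-abelian finite simple groups, that $\log|T|/\log\bigl(|\Aut(T)| \cdot 24^{1/3}\bigr) > \gamma$, with $A_5$ identified as the infimum. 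The alternating groups of small degree and the classical Lie-type groups of small rank (where $|\Out(T)|$ can be comparatively large relative to $\log|T|$) require the closest scrutiny, while the sporadic and exceptional Lie-type groups are dispatched by finite enumeration.
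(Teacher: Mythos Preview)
Your sharpness construction via $W_{m+1}=W_m\wr S_4$ is correct and coincides with the paper's, and the overall shape of the argument (embed $G$ in $\Aut(\Soc(G))$, bound the top permutation group, minimise over simple groups) is right. The gap is in the homogeneous-socle step. Your reduction only arranges $\Soc(G)=T^n$; it does \emph{not} force the image $P\leq S_n$ to be soluble. For instance, if $P_0\leq S_n$ is any insoluble transitive group and $G=\Aut(T)\wr P_0$, then $\Soc(G)=T^n$ is homogeneous yet $P=P_0$ is insoluble. Your assertion that $\gamma\log|P|-\log|P^{\mathfrak S}|$ is ``maximised when $P$ is itself soluble'' is precisely what is needed, but you do not prove it, and Dixon's bound $|P|\le 24^{(n-1)/3}$ is unavailable for such $P$. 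In fact, showing that this maximum over all $P\leq S_n$ is attained at a soluble $P$ is essentially the theorem applied to $T\wr P$, so as written the argument is circular at this point.

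The paper avoids this by a different inductive split. It takes a minimal normal subgroup $N$, sets $N_1/N=(G/N)_{\mathfrak S}$ (the soluble radical of $G/N$), and distinguishes two cases. If $N_1<G$, then both $G/N_1$ and $N_1$ are proper with trivial Fitting subgroup; one checks $G^{\mathfrak S}\cap N_1\geq N_1^{\mathfrak S}$ and multiplies the two inductive inequalities. If $N_1=G$, then $G/N\in\mathfrak S$, which forces $N$ to be the \emph{unique} minimal normal subgroup, $\cent_G(N)=1$, $N=S^n$, and---crucially---the image $H\leq S_n$ of $G/N$ is soluble, so Dixon's bound applies directly. Your proof can be repaired along the same lines: in the homogeneous case with $P$ insoluble, let $N_1\lhd G$ be the preimage of the soluble radical of $P$ under $G\to P$; then $N_1$ and $G/N_1$ are proper with trivial Fitting subgroup, and induction on this pair reduces you to the case where $P$ is soluble, after which your displayed computation goes through.
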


Here, $\mathfrak{S}$ denotes the class all finite soluble groups, $\fitt(G)$ is the Fitting subgroup of the finite group $G$, and $G^\mathfrak{S}$ is the soluble residual of $G$. The principal aim of the present paper is to clarify the (rather involved) background of Theorem~\ref{Thm:SolRes}, which relies on a delicate interplay between residuals and radicals, and to use this analysis for the purpose of establishing a substantial generalisation; see Theorem~\ref{Thm:Main} in Section~\ref{Sec:Main}. 

It is a well-known fact that, for a finite soluble group $G$, the Fitting subgroup $\fitt(G)$ controls triviality respectively non-triviality of $G$. 
As a closer analysis reveals, it is that property which makes the Fitting condition in Theorem~\ref{Thm:SolRes} natural and necessary. This observation indicates that our starting point for generalising Theorem~\ref{Thm:SolRes} should be the construction of a suitable extension $\overline{\X}$ of a given normal-product-closed class of finite groups $\X$, which is controlled by $\X$ in the sense that 
\begin{equation}\label{Eq:controlProp}
G\in \overline{\X} \,\mbox{ and }\, G_{\X} =  1 \,\mbox{ implies }\, G =  1
\end{equation}
(here, $G_{\X}$ denotes the $\X$-radical of the group $G$). It turns out that the natural candidate for $\overline{\X}$ is the class of all poly-$\X$ groups, and Section~\ref{Sec:ExtClos} takes a closer look at this construction of an ``extension-closure'' of a class of groups. 



In Sections~\ref{Sec:Main} and \ref{Sec:Optimality} we prove our main result, Theorem~\ref{Thm:Main}. Sections~\ref{Sec:Res}--\ref{Sec:ClosInter} provide some useful general observations on residuals of finite groups, and their radicals, as well as recalling certain facts concerning the interdependence of closure properties of group classes. We include this material partly for the benefit of the reader, to make the paper more readable and self-contained, but also since most of the results mentioned in these three sections play a role in the proof of Theorem~\ref{Thm:Main}. Our final Section~\ref{Sec:Appl} discusses a class of applications of our main result, which are beyond the reach of Theorem~\ref{Thm:SolRes}.

\section{Residuals: some preliminaries}
\label{Sec:Res}
By a \emph{class of groups} $\X$ we shall mean a class $\X$ in the set-theoretic sense, whose members are finite groups, which contains the trivial group $1$, and is closed under isomorphism: if $G\in\X$ and $H\cong G$, then $H\in\X$. 
Let $\mathfrak{A}$, $\mathfrak{N}$, $\mathfrak{S}$, and $\mathfrak{E}$ denote, respectively, the class of finite abelian, finite nilpotent, finite soluble, and all finite groups. Also, as usual, we denote by $\Zent(G)$, $\fratt(G)$, and $\fitt(G)$, respectively, the centre, Frattini subgroup, and Fitting subgroup of the finite group $G$. We note that if $\X$ is a class of groups, $G$ a finite group, and $\alpha$ an automorphism of $G$, then $\alpha$ sends an $\X$-subgroup of $G$ to an $\X$-subgroup, and a normal $\X$-subgroup of $G$ to a normal $\X$-subgroup, since $\X$ is closed under isomorphisms. A class of groups $\X$ is called \emph{residually-closed}, or $\textsc{r}_0$-closed for short, if it satisfies
\[
G\in \mathfrak{E} \mbox{ and } N_1, N_2\unlhd G \mbox{ with }G\big/N_i\in\X \mbox{ for } i=1, 2 \mbox{ implies } G\big/(N_1\cap N_2)\in\X.
\]
Just as $\textsc{r}_0$-closure is associated with formations, so $\textsc{n}_0$-closure, its dual, is associated with Fitting classes. A class of groups $\X$ is called $\textsc{n}_0$-closed, if it satisfies
\[
G\in \mathfrak{E} \mbox{ and } N_1, N_2\unlhd\unlhd\, G \mbox{ with }N_i\in\X \mbox{ for } i=1, 2 \mbox{ and } G = \langle N_1, N_2\rangle\mbox{ implies } G\in\X.
\]

\begin{definition}
Given a residually-closed class of groups $\X$ and a finite group $G$, we define the $\X$-residual $G^{\X}$ of $G$ to be the smallest normal subgroup $N$ of $G$ such that $G/N\in\X$.  
\end{definition}

\begin{lemma}\label{Lem:Res}
Let $\X$ and $\Y$  be residually-closed classes of groups, and let $G$ be a finite group.
\begin{enumerate}[label={\upshape(\roman*)}]
\item The $\X$-residual $G^{\X}$ of $G$ exists, is a characteristic subgroup of $G,$  and is unique. Moreover, if $\X$ is also image-closed (that is, a formation), then
\begin{equation}
(G/N)^{\X} = G^{\X} N/N
\end{equation}
for each normal subgroup $N\unlhd G$.
\item We have $G^{\X} =  1$ if, and only if, $G\in\X$.
\item If $\X \subseteq \mathfrak{Y}$, then $G^{\mathfrak{Y}} \leq G^{\X}$. 
\end{enumerate}
\end{lemma}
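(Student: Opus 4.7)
My plan is to treat the three parts in the order (i), (ii), (iii), with (i) doing most of the work.

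For existence and uniqueness in (i), I would consider the (non-empty, since $G/G \cong 1 \in \X$) collection $\mathcal{N} = \{N \unlhd G : G/N \in \X\}$. Because $G$ is finite, this collection is finite. A repeated application of $\textsc{r}_0$-closure (formally, an induction on the number of factors) shows that $N := \bigcap_{M \in \mathcal{N}} M$ again lies in $\mathcal{N}$, so it is the unique minimum. Uniqueness is immediate from its characterisation as a minimum in an inclusion-ordered family. For the characteristic property, I would observe that if $\alpha \in \Aut(G)$ then $\alpha(G^{\X})$ is normal in $G$ and $G/\alpha(G^{\X}) \cong G/G^{\X} \in \X$, which, using isomorphism closure of $\X$, forces $\alpha(G^{\X}) \in \mathcal{N}$; by minimality and a cardinality comparison we get $\alpha(G^{\X}) = G^{\X}$.

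For the quotient formula in (i), assuming $\X$ is image-closed, I would prove the two containments in $(G/N)^{\X} = G^{\X}N/N$ separately. Since $G/(G^{\X}N)$ is a quotient of $G/G^{\X} \in \X$, image-closure gives $G/(G^{\X}N) \in \X$, equivalently $(G/N)/(G^{\X}N/N) \in \X$, so by minimality $(G/N)^{\X} \leq G^{\X}N/N$. Conversely, write $(G/N)^{\X} = M/N$ with $N \leq M \unlhd G$; then $G/M \cong (G/N)/(M/N) \in \X$, so $M \in \mathcal{N}$ and hence $G^{\X} \leq M$, giving $G^{\X}N \leq M$, i.e.\ $G^{\X}N/N \leq (G/N)^{\X}$.

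Part (ii) is a direct unpacking: if $G \in \X$ then $1 \in \mathcal{N}$, forcing $G^{\X} = 1$; conversely, if $G^{\X} = 1$, then $G \cong G/G^{\X} \in \X$. Part (iii) is equally short: when $\X \subseteq \Y$, the quotient $G/G^{\X}$ lies in $\X$, hence in $\Y$, so $G^{\X}$ belongs to the analogous collection defining $G^{\Y}$, whence $G^{\Y} \leq G^{\X}$ by the minimality established in (i).

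I do not anticipate a genuine obstacle here; the only mildly delicate point is the quotient identity in (i), where one must be careful to invoke image-closure in precisely the right spot (to deduce $G/(G^{\X}N) \in \X$ from $G/G^{\X} \in \X$), since without this hypothesis the formula can fail. Everything else reduces to bookkeeping with the defining closure properties and the finiteness of $G$.
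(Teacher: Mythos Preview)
Your argument is correct and matches the paper's treatment: for (ii) and (iii) you give essentially the same short proofs as the paper, and for (i) the paper simply cites \cite[II, Lem.~2.4]{DH}, whose content is precisely the standard argument (intersection over $\mathcal{N}$ via $\textsc{r}_0$-closure, invariance under automorphisms, and the two containments for the quotient formula using image-closure only for the inclusion $(G/N)^{\X}\leq G^{\X}N/N$) that you have written out.
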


\begin{proof}
(i) See \cite[II, Lem.~2.4]{DH}.

(ii) If $G^{\X} =  1$, then $G \cong G/G^{\X}\in\X$, so $G\in \X$. Conversely, if $G\in\X$, then the normal subgroup $N= 1 \unlhd G$ is such that $G/N \cong G \in\X$, thus $G/N\in\X$ and $G^{\X} \leq  1$, implying $G^{\X} =  1$.

(iii) If $N\unlhd G$ is such that $G/N\in \X$, then $G/N\in\mathfrak{Y}$ by hypothesis, thus $G^{\mathfrak{Y}} \leq N$. Taking $N=G^{\X}$, we find that $G^{\mathfrak{Y}} \leq G^{\X}$, as desired. 
\end{proof}





Concerning the abelian residuals of finite groups, the following was shown by Halasi and Podoski, and independently by Herzog, Kaplan, and Lev; cf. \cite[Thm.~1.1]{HP} and \cite[Thm.~A]{HKL}.

\begin{proposition}\label{Prop:AbRes}
Let $G$ be a finite group such that $\fratt(G) =  1$. Then we have
\begin{equation}
\label{Eq:AbResEst}
\left\lvert G'\right\rvert \,\geq\,\sqrt{(G : \Zent(G))},
\end{equation}
with equality occurring in~\eqref{Eq:AbResEst} if, and only if, $G$ is abelian.
\end{proposition}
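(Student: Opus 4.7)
The plan is to reduce the inequality to three elementary ingredients. First, the identity $g^{h}=g[g,h]$ shows that every conjugacy class $g^{G}$ is contained in the coset $gG'$, whence $|G:\cent_{G}(g)|\leq |G'|$ for every $g\in G$. Second, for any subgroups $H,K\leq G$ one has the standard bound $|G:H\cap K|\leq |G:H|\cdot|G:K|$, coming from the second isomorphism theorem. Third, and most delicately, the hypothesis $\fratt(G)=1$ will be leveraged to exhibit two elements $a,b\in G$ with $\cent_{G}(a)\cap\cent_{G}(b)=\Zent(G)$. Given all three, the desired estimate \eqref{Eq:AbResEst} will drop out of the chain
\[
|G:\Zent(G)|\;=\;|G:\cent_{G}(a)\cap\cent_{G}(b)|\;\leq\; |G:\cent_{G}(a)|\cdot|G:\cent_{G}(b)|\;\leq\; |G'|^{2}.
\]

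The main obstacle is the third step, which is where the Frattini hypothesis enters essentially. The natural approach is induction on $|G|$, reducing to the case in which $G$ has a single minimal normal subgroup $N$ that we can describe explicitly. Since $\fratt(G)=1$, such an $N$ is complemented in $G$, so $G=N\rtimes H$ for some $H\leq G$, and the conjugation action of $G$ on $N$ is well controlled. The abelian-socle case (where $N$ is an elementary abelian $p$-group and $H$ acts faithfully on $N$ once $N$ is self-centralising) and the non-abelian-socle case (where $N$ is a direct product of isomorphic non-abelian simple groups) both admit the required pair, but via slightly different choices: one element is picked to collapse the centraliser on $N$, the other to collapse the remaining part of the centraliser in $H$; in the non-abelian case one also uses the 2-generation of finite non-abelian simple groups.

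For the equality statement, suppose that $|G'|^{2}=|G:\Zent(G)|$. Then each inequality in the displayed chain is saturated. In particular, $|G:\cent_{G}(a)|=|G'|$, which forces the conjugacy class $a^{G}$ to fill the entire coset $aG'$, so that every element of $G'$ is realised as a commutator $[a,h]$ for some $h\in G$; an analogous statement holds for $b$. These very rigid conditions, in combination with $\fratt(G)=1$ (which rules out the otherwise typical extremal examples, e.g. extraspecial $p$-groups, where equality does hold but the Frattini subgroup is non-trivial), force $G'=1$, so that $G$ is abelian. The converse implication is immediate, since $G'=1$ and $\Zent(G)=G$ for abelian $G$.
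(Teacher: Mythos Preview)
The paper does not actually prove this proposition: it is quoted as a known result of Halasi--Podoski \cite{HP} and, independently, Herzog--Kaplan--Lev \cite{HKL}. Your overall strategy---bounding $(G:\Zent(G))$ by $|G'|^{2}$ via two well-chosen centralisers---is exactly the approach of \cite{HP}, and your Steps~1 and~2 are correct and standard.

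The gap is in Step~3. The assertion that, under $\fratt(G)=1$, there exist $a,b\in G$ with $\cent_{G}(a)\cap\cent_{G}(b)=\Zent(G)$ is the main theorem of \cite{HP}, and your sketch does not prove it. The proposed reduction ``by induction on $|G|$ to a unique minimal normal subgroup'' is unjustified: the hypothesis $\fratt(G)=1$ is not in general inherited by a quotient $G/N$, so you cannot simply apply the inductive hypothesis to a proper quotient, nor do you explain how elements produced for smaller groups would be assembled into a single pair working for $G$. Even in the base case of a unique minimal normal subgroup, your description (``one element is picked to collapse the centraliser on $N$, the other to collapse the remaining part'') is too vague to verify; for instance, in the abelian-socle case with $N$ self-centralising you need a pair whose centralisers intersect in $\Zent(G)$, and it is not at all clear that one element from $N$ and one from $H$ achieve this. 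The actual argument in \cite{HP} is organised around the structure of the generalised Fitting subgroup when $\fratt(G)=1$, rather than such an induction. Your treatment of the equality case is likewise only an assertion: you give no mechanism by which the saturation conditions $a^{G}=aG'$ and $b^{G}=bG'$, together with $\fratt(G)=1$, force $G'=1$.
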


Moreover, building on the work in~\cite{HKL1} and~\cite{HKL}, Guo and Gong have shown the following concerning the size of nilpotent residuals of finite groups; cf. \cite[Thm.~0.4]{GG}.

\begin{proposition}\label{Prop:NilRes}
Let $G$ be a finite group such that $\fratt(G) =  1$. Then the inequality
\begin{equation}
\label{Eq:NilResEst}
\left\lvert G'\right\rvert\cdot \left\lvert G^\mathfrak{N}\right\rvert \,\geq\, \left(G : \Zent(G)\right)
\end{equation}
holds true, with equality occurring in~\eqref{Eq:NilResEst} if, and only if, $G$ is abelian.
\end{proposition}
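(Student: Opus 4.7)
The proposed inequality refines Proposition~\ref{Prop:AbRes}, since by Lemma~\ref{Lem:Res}(iii) applied to $\mathfrak{A}\subseteq\mathfrak{N}$ we have $G^{\mathfrak{N}}\leq G^{\mathfrak{A}}=G'$, so $|G'|\cdot|G^{\mathfrak{N}}|\leq|G'|^2$; the new bound is genuinely stronger precisely when $G$ is non-nilpotent. The plan is induction on $|G|$, with Proposition~\ref{Prop:AbRes} as the base ingredient and a detailed analysis of the action of $G$ on $\fitt(G)$ providing the improvement.

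\textbf{Easy cases.} If $G$ is abelian, both sides equal $1$ and we have equality. If $G$ is nilpotent but non-abelian, then $1\neq G'\leq\fratt(G)$ contradicts $\fratt(G)=1$; so this case does not arise. Henceforth $G$ is non-nilpotent, so $G^{\mathfrak{N}}\neq 1$.

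\textbf{Structural step.} Set $F:=\fitt(G)$. Since $\fratt(F)\leq\fratt(G)=1$, $F$ is Frattini-free and nilpotent, hence abelian and a direct product of elementary abelian $p$-groups. Gaschütz's theorem further yields a decomposition $F=\bigoplus_{i=1}^{k}M_{i}$ into minimal normal subgroups of $G$. Each $M_i$ is either centralized by $G$ or acted on irreducibly and non-trivially; I claim in the second case that $M_{i}\leq G^{\mathfrak{N}}$. Indeed, if $M_{i}\not\leq G^{\mathfrak{N}}$, then $M_{i}\cap G^{\mathfrak{N}}=1$ by minimality, so $M_{i}$ embeds as a non-trivial normal subgroup of the nilpotent group $G/G^{\mathfrak{N}}$; every such subgroup meets the centre non-trivially, so its preimage in $G$ contains a non-zero $G$-submodule of $M_{i}$ centralized by $G$, and irreducibility then forces $G$ to centralize all of $M_{i}$, contradicting non-triviality of the action. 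Writing $C:=\bigoplus_{M_{i}\ \text{central}} M_{i}$ and $D:=\bigoplus_{M_{i}\ \text{non-central}} M_{i}=[F,G]$, one obtains $F=C\oplus D$ with $C=F\cap\Zent(G)$ and $D\leq G'\cap G^{\mathfrak{N}}$.

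\textbf{Finishing and main obstacle.} Partitioning along $F$ gives
\[
|G:\Zent(G)| \;=\; |G/F : \Zent(G)F/F|\cdot|D|,
\]
while image-closure of $\mathfrak{A}$ and $\mathfrak{N}$ (Lemma~\ref{Lem:Res}(i)), together with $D\leq G'\cap F$ and $D\leq G^{\mathfrak{N}}\cap F$, yields
\[
|G'|\cdot|G^{\mathfrak{N}}| \;=\; |G'\cap F|\cdot|G^{\mathfrak{N}}\cap F|\cdot|(G/F)'|\cdot|(G/F)^{\mathfrak{N}}| \;\geq\; |D|^{2}\cdot|(G/F)'|\cdot|(G/F)^{\mathfrak{N}}|.
\]
It therefore suffices to prove an inequality of the same shape for $G/F$, with one extra factor of $|D|$ in reserve. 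The principal technical obstacle is that $\fratt(G/F)$ need not vanish, so a direct inductive invocation on $G/F$ is not available; the standard remedy (as in~\cite{GG,HKL1}) is either to replace $G/F$ by a suitable Frattini-free model, or to iterate chief factor by chief factor and invoke Proposition~\ref{Prop:AbRes} at the terminal step. The equality analysis then forces $D=1$ (every chief factor of $F$ is central) and, through the equality case of Proposition~\ref{Prop:AbRes}, ultimately forces $G$ to be abelian.
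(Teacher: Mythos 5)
This proposition is not proved in the paper at all: the authors simply cite it as a theorem of Guo and Gong, \cite[Thm.~0.4]{GG}, so there is no internal argument against which to compare your attempt. Judged on its own terms, your sketch contains a genuine and, by your own admission, unresolved gap. You correctly reduce the problem to showing
\[
|D|\cdot|(G/F)'|\cdot|(G/F)^{\mathfrak{N}}| \;\geq\; |G/F : \Zent(G)F/F|,
\]
and you note that $\fratt(G/F)$ need not be trivial, so the statement cannot be applied inductively to $G/F$. The two proposed remedies (``replace $G/F$ by a suitable Frattini-free model'' or ``iterate chief factor by chief factor'') are named but not carried out; neither is a routine fill-in. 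Passing to $(G/F)/\fratt(G/F)$ changes all four quantities on the left, and quotienting by a single minimal normal subgroup does not restore the Frattini hypothesis either, so the induction simply does not close.

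There is also a second, unacknowledged difficulty: even granting an inductive bound for $G/F$, it would read $|(G/F)'|\cdot|(G/F)^{\mathfrak{N}}|\geq|G/F:\Zent(G/F)|$, whereas you need $|G/F:\Zent(G)F/F|$, and in general $\Zent(G)F/F$ is only a subgroup of $\Zent(G/F)$, so $|G/F:\Zent(G)F/F|\geq|G/F:\Zent(G/F)|$ --- the wrong direction. To make your bookkeeping work you would further need $|D|\geq|\Zent(G/F):\Zent(G)F/F|$, and nothing in the sketch establishes this. The structural preliminaries (that $F=\fitt(G)$ is Frattini-free, hence a direct sum of minimal normal subgroups $M_i$; that each non-central $M_i$ lies in $G^{\mathfrak{N}}$; that $D=[F,G]$ and $C=F\cap\Zent(G)$) are all correct and worth keeping, but the induction as outlined does not yield the proposition.
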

We observe that Propositions~\ref{Prop:AbRes} and ~\ref{Prop:NilRes} admit of a rather elegant common generalisation as follows.

\begin{proposition}
\label{Prop:AbNilGen}
Let $\X$ be a residually-closed class of groups such that $\A \subseteq \X \subseteq \mathfrak{N}$, and let $G$ be a finite group with $\fratt(G) = 1$. Then we have
\begin{equation}\label{Eq:AbNilGenEst}
\left(G : \Zent(G)\right)\, \leq \, \left\lvert G'\right\rvert \cdot \left\lvert G^{\X}\right\rvert,
\end{equation}
with equality occurring in \eqref{Eq:AbNilGenEst} if, and only if, $G$ is abelian.
\end{proposition}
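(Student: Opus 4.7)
The plan is to deduce this from Proposition~\ref{Prop:NilRes} by sandwiching $G^{\X}$ between $G^{\mathfrak{N}}$ and $G^{\A}$ and appealing to the monotonicity statement of Lemma~\ref{Lem:Res}(iii).

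First I would observe that for any finite group $G$, the $\A$-residual coincides with the derived subgroup: indeed $G^{\A}$ is the smallest normal subgroup with abelian quotient, which is exactly $G'$. Next, since we are assuming $\A \subseteq \X \subseteq \mathfrak{N}$ and all three classes are residually closed, Lemma~\ref{Lem:Res}(iii) gives the chain of normal subgroups
\begin{equation}
G^{\mathfrak{N}} \,\leq\, G^{\X} \,\leq\, G^{\A} \,=\, G',
\end{equation}
and in particular $\lvert G^{\mathfrak{N}}\rvert \leq \lvert G^{\X}\rvert$. Multiplying this inequality by $\lvert G'\rvert$ and invoking Proposition~\ref{Prop:NilRes} (which applies because $\fratt(G) = 1$ by hypothesis) yields
\begin{equation}
(G : \Zent(G)) \,\leq\, \lvert G'\rvert \cdot \lvert G^{\mathfrak{N}}\rvert \,\leq\, \lvert G'\rvert \cdot \lvert G^{\X}\rvert,
\end{equation}
which is the desired bound \eqref{Eq:AbNilGenEst}.

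For the equality case, if $G$ is abelian then $G \in \A \subseteq \X$, so Lemma~\ref{Lem:Res}(ii) gives $G^{\X} = 1$; also $G' = 1$ and $\Zent(G) = G$, so both sides of \eqref{Eq:AbNilGenEst} equal $1$. Conversely, if equality holds in \eqref{Eq:AbNilGenEst}, then both inequalities in the display above are equalities; in particular $\lvert G'\rvert \cdot \lvert G^{\mathfrak{N}}\rvert = (G : \Zent(G))$, and the equality clause of Proposition~\ref{Prop:NilRes} forces $G$ to be abelian. I do not anticipate a substantial obstacle here: the whole argument is a short reduction, and the only mild point to verify carefully is that $G^{\A} = G'$ and that Lemma~\ref{Lem:Res}(iii) can indeed be applied in both comparisons $\A \subseteq \X$ and $\X \subseteq \mathfrak{N}$, which is immediate from the residual-closure hypotheses on all three classes.
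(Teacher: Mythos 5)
Your proposal is correct and takes essentially the same route as the paper, which simply cites Proposition~\ref{Prop:NilRes} together with Lemma~\ref{Lem:Res}(iii); you have filled in the one-line deduction in full detail, including the sandwich $G^{\mathfrak{N}} \leq G^{\X} \leq G^{\A} = G'$ and the two-directional treatment of the equality case, all of which is sound.
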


\begin{proof}
This follows from Proposition~\ref{Prop:NilRes} in conjunction with Lemma~\ref{Lem:Res} (iii).
\end{proof}

\section{Radicals of finite groups}\label{Sec:Rad}
As the proof of our main result, Theorem~\ref{Thm:Main}, involves a delicate interplay between radical and residual theory, we shall use this section to briefly summarise what little can be said in general about radicals of finite groups. 


\begin{definition}
We say that a class of groups $\X$ is \emph{normal-product-closed}, if, for each finite group $G$, and any two normal subgroups $N_1, N_2\unlhd G$ with $N_1, N_2\in\X$, we have $N_1N_2\in \X$. 
\end{definition}

\begin{remark}\label{Rem:2gamma}
By \cite[II, Prop.~2.11(b)]{DH}, a class is normal-product-closed if, and only if, it is $\textsc{n}_0$-closed.
\end{remark}

\begin{lemma}\label{Lem:RadProps}
Let $\X$ be a normal-product-closed class of groups. 
\begin{enumerate}[label={\upshape(\roman*)}]
\item For each finite group $G$,  there exists a unique normal $\X$-subgroup $G_{\X}$ of $G$, which is largest in the sense that it contains every normal $\X$-subgroup of $G$. It is called the $\X$-radical of $G$, and is a characteristic subgroup of $G$.
\item If $G$ is a finite group, then we have $G_{\X} = G$ if, and only if, $G\in\X$.
\item Let $G$ be a finite group, and let $H$ be a subnormal $\X$-subgroup of $G$. Then $H\leq G_{\X}$.
\item Let $\Y$ be a normal-product-closed class of groups such that $\X\subseteq \Y$. Then, for each finite group $G$, we have $G_{\X} \leq G_{\Y}$.
\end{enumerate}
\end{lemma}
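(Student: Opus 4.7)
The plan is to handle (i) by a maximality argument, deduce (ii) and (iv) as short consequences, and prove the substantive part (iii) by induction on the subnormal defect.

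For (i), since the trivial group lies in $\X$, the family of normal $\X$-subgroups of $G$ is non-empty and, by finiteness of $G$, finite; its product $G_\X$ is a normal subgroup of $G$ that lies in $\X$ by iterated application of normal-product-closure, and by construction contains every normal $\X$-subgroup. Uniqueness is automatic from this maximal property. The characteristic property follows from the opening observation in Section~\ref{Sec:Res}: every $\alpha \in \Aut(G)$ sends a normal $\X$-subgroup to one of the same kind, so $\alpha(G_\X)$ is itself a normal $\X$-subgroup and hence contained in $G_\X$; applying $\alpha^{-1}$ yields the reverse inclusion.

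Parts (ii) and (iv) are then immediate. For (ii), if $G \in \X$ then $G$ itself is a normal $\X$-subgroup, so $G \leq G_\X$; the reverse inclusion is obvious from $G_\X \leq G$. For (iv), $G_\X$ is a normal $\Y$-subgroup of $G$ since $\X \subseteq \Y$, whence $G_\X \leq G_\Y$ by the defining maximality of the $\Y$-radical.

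The main content is (iii), which I would prove by induction on the subnormal defect $n$ of $H$ in $G$. The case $n \leq 1$ is either trivial ($H = G \in \X$, so $G_\X = G$ by (ii)) or is exactly the defining property of $G_\X$. For $n \geq 2$, fix a chain $H = H_0 \unlhd H_1 \unlhd \cdots \unlhd H_n = G$. Applying (i) inside $H_1$ gives $H \leq (H_1)_{\X}$, and $(H_1)_{\X}$ is characteristic in $H_1$. Since $H_1 \unlhd H_2$, conjugation by elements of $H_2$ restricts to automorphisms of $H_1$ and therefore preserves $(H_1)_{\X}$, so $(H_1)_{\X} \unlhd H_2$. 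The chain $(H_1)_{\X} \unlhd H_2 \unlhd \cdots \unlhd H_n = G$ then exhibits $(H_1)_\X$ as a subnormal $\X$-subgroup of $G$ of defect at most $n-1$, and by the inductive hypothesis $(H_1)_{\X} \leq G_\X$, whence $H \leq G_\X$. The only mildly delicate point, and the place where I would expect the main obstacle to lie, is this observation that a characteristic subgroup of a subnormal subgroup remains subnormal with reduced defect; it is what allows the induction to proceed cleanly without having to pass through the $\textsc{n}_0$-closure equivalence of Remark~\ref{Rem:2gamma}.
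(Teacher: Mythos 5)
Your proposal is correct, and the arguments for (i), (ii), and (iv) match the paper in substance (the paper simply cites Doerk--Hawkes for (i) rather than spelling out the product construction). For the substantive part (iii), however, you take a genuinely different route: you induct on the subnormal defect of $H$ in $G$ and peel off the \emph{bottom} of the subnormal chain, replacing $H$ by $(H_1)_\X$ which has strictly smaller defect. The paper instead inducts on $\lvert G\rvert$ and peels off the \emph{top} of the chain: it takes $M$ to be the penultimate term, applies the inductive hypothesis to the smaller group $M$ to conclude $H\leq M_\X$, and then uses that $M_\X$ is characteristic in $M\unlhd G$, hence a normal $\X$-subgroup of $G$. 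Both proofs hinge on exactly the same elementary fact --- a characteristic subgroup of a normal subgroup is normal --- but apply it at opposite ends of the chain. Your version has the small advantage that the induction parameter (defect) is entirely intrinsic to the pair $(H,G)$ and does not require you to first dispose of the case $H=G$ or to argue that $M$ is a proper subgroup; the paper's version has the small advantage that the inductive hypothesis is applied to a subgroup ($M$) that is literally a member of the given chain, with no new radical to be introduced midway. Either works without difficulty, and you correctly identified the one mildly delicate step (characteristic-in-subnormal stays subnormal with reduced defect), which you justify properly.
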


\begin{proof}
(i) See \cite[II, Lem.~2.9]{DH}, noting that by \cite[II, Prop.~2.11(b)]{DH} a class is normal-product-closed if, and only if, it is $\textsc{n}_0$-closed.


(ii) This is trivial.

(iii) We argue by induction on $\vert G\vert$. If $G =  1$, then $H = G_{\X} = G$. Suppose now that our claim holds for all groups with $\vert G\vert <m$ for some integer $m\geq2$, and consider a group $G$ with $\vert G\vert = m$. If $H = G$, then $G\in\X$, so $H = G = G_{\X}$. We may therefore assume that $H < G$. Let $M$ be the penultimate term in a subnormal chain reaching from $H$ to $G$. Then $H$ is subnormal in $M$, $M$ is normal in $G$, and $M<G$. The induction hypothesis applied to $M$ shows that $H\leq M_{\X}$. However, the radical $M_{\X}$ is characteristic in $M$, and $M$ is normal in $G$, thus $M_{\X}$ is a normal $\X$-subgroup of $G$. It follows that $M_{\X} \leq G_{\X}$, so that $H \leq G_{\X}$, as desired.

(iv) Let $G$ be a finite group. By Part~(i), the radicals $G_{\X}$ and $G_{\Y}$ exist. Moreover, since $\X \subseteq \Y$, the radical $G_{\X}$ is a normal $\Y$-subgroup of $G$, hence $G_{\X} \leq G_{\Y}$, as claimed.
\end{proof}
\section{Some interdependences between closure properties of group classes}
\label{Sec:ClosInter}
Our next result summarises some interdependences between closure properties of group classes, which will be useful later on. 

\begin{lemma}
\label{Lem:XProps}
Let $\X$ be a class of groups.
\begin{enumerate}[label={\upshape(\roman*)}]
\item If $\X$ is normal-product-closed, then $\X$ is closed under taking direct products.
\item If $\X$ is subgroup-closed, as well as closed under taking direct products, then $\X$ is residually-closed.
\item If $\X$ is image-closed as well as extension-closed, then $\X$ is normal-product-closed.
\item Suppose that $\X$ is image-closed, as well as normal-product-closed, let $G$ be a finite group, and let $N$ be a normal subgroup of $G$. Then we have 
\begin{equation}\label{Eq:QuoRad}
G_{\X} N/N \,\leq\, (G/N)_{\X}. 
\end{equation}
If $N\in \X$, and $\X$ is image-closed as well as extension-closed, then \eqref{Eq:QuoRad} holds with equality.
\item Suppose that $\X$ is both normal-product-closed as well as extension-closed, and let $G$ be an arbitrary finite group. Then $\left(G/G_{\X}\right)_{\X} =  1$.
\end{enumerate}
\end{lemma}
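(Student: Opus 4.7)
The plan is to handle each of the five parts by a direct check, exhibiting the right normal subgroups and applying the relevant closure hypothesis exactly once; the only nontrivial bookkeeping is in (iv) and (v), where the $\X$-radical of a quotient must be compared with the quotient of the $\X$-radical.

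For part (i), I would view the direct product $G \times H$ as the normal product of $G \times \{1\}$ and $\{1\} \times H$; these are normal $\X$-subgroups of $G \times H$ by isomorphism-closure, so normal-product-closure gives $G \times H \in \X$. Part (ii) follows from the diagonal embedding $G/(N_1 \cap N_2) \hookrightarrow G/N_1 \times G/N_2$: the target lies in $\X$ by closure under direct products, and subgroup-closure then forces the source into $\X$. For part (iii), given normal $\X$-subgroups $N_1, N_2$ of $G$, I would realise $N_1 N_2$ as the extension of $N_1 \in \X$ by $N_1 N_2 / N_1 \cong N_2 / (N_1 \cap N_2)$; the quotient lies in $\X$ by image-closure, and $N_1 N_2 \in \X$ then follows from extension-closure.

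For part (iv), I would first establish the inclusion $G_{\X} N / N \leq (G/N)_{\X}$ by noting that $G_{\X} N / N \cong G_{\X}/(G_{\X} \cap N)$ is a homomorphic image of $G_{\X} \in \X$, hence in $\X$ by image-closure; being normal in $G/N$, it is contained in the $\X$-radical of the quotient by Lemma~\ref{Lem:RadProps}(i). For the reverse containment when $N \in \X$ and $\X$ is extension-closed, I would lift: writing $(G/N)_{\X} = K/N$ with $N \leq K \unlhd G$, the group $K$ is an extension of $N \in \X$ by $K/N \in \X$, so extension-closure yields $K \in \X$; maximality of $G_{\X}$ then gives $K \leq G_{\X}$, and consequently $K/N \leq G_{\X} N/N$.

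Part (v) is essentially the lifting argument of (iv) applied to $N = G_{\X}$: setting $H/G_{\X} := (G/G_{\X})_{\X}$, the extension $1 \to G_{\X} \to H \to H/G_{\X} \to 1$ has both outer terms in $\X$ (using Lemma~\ref{Lem:RadProps}(ii) for $G_{\X}$), so extension-closure yields $H \in \X$, and maximality of the $\X$-radical forces $H = G_{\X}$, whence $(G/G_{\X})_{\X}$ is trivial. The chief delicate point in the whole lemma is separating the two roles of image-closure and extension-closure in (iv): the forward inclusion needs only the former (to push $G_{\X}$ down into the quotient), whereas the equality case additionally requires the latter (to lift a normal $\X$-subgroup of $G/N$ back into $G$). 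Once this dichotomy is recognised, all five parts reduce to routine isomorphism-theorem bookkeeping.
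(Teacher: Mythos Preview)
Your proof is correct and follows essentially the same route as the paper's: parts (i), (iii), (iv), and (v) match the paper's arguments almost verbatim, and your explicit diagonal-embedding argument for (ii) is precisely the content of \cite[II, Lem.~1.18]{DH}, which the paper simply cites. One tiny slip: in part (v) you invoke Lemma~\ref{Lem:RadProps}(ii) to justify $G_{\X}\in\X$, but this is really part of the definition of the radical in Lemma~\ref{Lem:RadProps}(i).
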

\begin{proof}
(i) Let $G_1, G_2\in\X$, and set $H\coloneqq G_1\times G_2$, the (external) direct product of $G_1$ and $G_2$. Then $H$ is a finite group and $\widetilde{G}_1\coloneqq \{(x, 1): x \in G_1\}$, $\widetilde{G}_2\coloneqq\{(1, y): y\in G_2\}$ are normal subgroups of $H$. Moreover, we have $\widetilde{G}_i\cong G_i$ for $i=1, 2$, thus $\widetilde{G}_i\in\X$, by assumption, and $\widetilde{G}_1 \widetilde{G}_2 = H$. Since $\X$ is normal-product-closed by hypothesis, it follows that $H\in\X$, as claimed.

(ii) See \cite[II, Lem.~1.18]{DH}.


(iii) Let $G$ be an arbitrary finite group, and let $N_1, N_2 \unlhd G$ be such that $N_1, N_2\in\X$. Then the normal product $N_1N_2$ in $G$ is an extension of, say, $N_1\in \X$ by
\[
N_1N_2/N_1 \,\cong\, N_2\big/(N_1\cap N_2)\,\in\,\X,
\]
since $N_2\in\X$, and $\X$ is image-closed. It follows that $N_1N_2\in\X$, since $\X$ is assumed to be extension-closed. Hence, $\X$ is normal-product-closed, as claimed.

(iv) Since $N \leq G_{\X} N \unlhd G$, $G_{\X} N/N$ is a normal  subgroup of $G/N$, and we have 
\[
G_{\X} N/N \,\cong\, G_{\X}\big/(G_{\X} \cap N) \in \X,
\]
since $G_{\X}\in \X$ by definition, and $\X$ is image-closed by hypothesis. The desired inclusion, $G_{\X} N/N \leq (G/N)_{\X}$, follows now from the definition of the $\X$-radical. 

Now, suppose that $N\in\X$, and that $\X$ is image-closed, as well as extension-closed. Then \eqref{Eq:QuoRad} holds by what we have just proven plus Part~(iii) of our lemma. Let $H/N$ be a normal subgroup of $G/N$ with $H/N\in \X$. Then $N\leq H\unlhd G$, so that $H$ is an extension of $N\in\X$ by $H/N\in \X$. Since $\X$ is extension-closed by assumption, we have $H\in \X$, thus $H\leq G_{\X}$ by definition of the $\X$-radical, implying $H/N \leq G_{\X} N/N$. Taking $H/N = (G/N)_{\X}$, it follows that $G_{\X} N/N = (G/N)_{\X}$, as claimed.

(v) Let $H = H_1/G_{\X}$ be an arbitrary normal $\X$-subgroup of $G/G_{\X}$, where $G_{\X} \leq H_1\unlhd G$. Then $H_1$ is an extension of the $\X$-subgroup $G_{\X} \unlhd H_1$ by the $\X$-group $H$; thus  $H_1$  is a normal $\X$-subgroup of $G$, since $\X$ is assumed to be extension-closed. It follows that $G_{\X} \leq H_1\leq G_{\X}$, so $H_1 = G_{\X}$ and thus $H =  1$, implying $(G/G_{\X})_{\X} =  1$ as desired, since $H$ was arbitrary.
\end{proof}

\section{Extension-closure of a class of groups}\label{Sec:ExtClos}
Let $\X$ be a class of (finite or infinite) groups. We define a class of groups $\overline{\X}$ via
\begin{multline}
\label{Eq:EClosure}
G\in\overline{\X} :\Longleftrightarrow \mbox{there exists a finite subnormal series 
$ 1 = G_0 \unlhd G_1 \unlhd \cdots \unlhd G_r = G$}\\
\mbox{such that $G_i/G_{i-1} \in \X$ for $1\leq i\leq r$.}
\end{multline}
\begin{remark}
If we interpret the class $\X$ as a group property, then the elements of $\overline{\X}$ are precisely the poly-$\X$ groups.
\end{remark}
\begin{lemma}
\label{Lem:CO}
The class map ${}^-$ sending $\X$ to $\overline{\X}$ is a closure operation in the sense of \cite[II, Def.~(1.4)~(a)]{DH}.
\end{lemma}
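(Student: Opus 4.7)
The plan is to verify the three axioms defining a closure operation in the sense of \cite[II, Def.~(1.4)~(a)]{DH}: (C1) extensivity $\X \subseteq \overline{\X}$, (C2) monotonicity $\X \subseteq \Y \Rightarrow \overline{\X} \subseteq \overline{\Y}$, and (C3) idempotence $\overline{\overline{\X}} = \overline{\X}$. Two of the three are essentially tautologies given the definition \eqref{Eq:EClosure}; the only substantive content is in (C3), and even there the argument is the standard refinement/splicing trick for subnormal series.

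For (C1), given $G \in \X$ the two-term series $1 = G_0 \unlhd G_1 = G$ has its single factor $G_1/G_0$ isomorphic to $G$, hence in $\X$ (using that $\X$ is isomorphism-closed by the standing convention). So $G \in \overline{\X}$. The trivial group is covered by taking $r=0$, so $\overline{\X}$ contains $1$ and is plainly closed under isomorphism, hence a bona fide class of groups in the sense of Section~\ref{Sec:Res}. For (C2), a subnormal series witnessing $G \in \overline{\X}$ with factors in $\X$ automatically witnesses $G \in \overline{\Y}$ whenever $\X \subseteq \Y$.

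The heart of the argument is (C3). The inclusion $\overline{\X} \subseteq \overline{\overline{\X}}$ follows from (C1) applied to $\overline{\X}$. For the reverse, suppose $G \in \overline{\overline{\X}}$ with witnessing series
\[
1 = G_0 \unlhd G_1 \unlhd \cdots \unlhd G_r = G, \qquad G_i/G_{i-1} \in \overline{\X}.
\]
Each factor $G_i/G_{i-1}$ admits, by definition, a subnormal series with factors in $\X$. By the correspondence theorem, this series lifts to a chain
\[
G_{i-1} = H_{i,0} \unlhd H_{i,1} \unlhd \cdots \unlhd H_{i,s_i} = G_i
\]
of subgroups of $G_i$, whose successive quotients $H_{i,j}/H_{i,j-1}$ are isomorphic to the corresponding factors of the series in $G_i/G_{i-1}$, and therefore lie in $\X$. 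Concatenating these refined chains for $i = 1, \ldots, r$ produces a single subnormal series from $1$ to $G$ whose factors all lie in $\X$, so $G \in \overline{\X}$.

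I do not anticipate a genuine obstacle: the only place where care is needed is the lifting step in (C3), but this is just the correspondence theorem plus the trivial observation that refinements of subnormal series are again subnormal series. No special hypotheses on $\X$ (such as subgroup-closure, image-closure, or even containing the trivial group beyond the standing convention) are required for the argument to go through.
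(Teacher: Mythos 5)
Your proof is correct and follows essentially the same route as the paper: extensivity via the two-term series, monotonicity as a triviality, and idempotence by lifting the $\X$-series of each factor through the correspondence theorem and concatenating. The small aside confirming that $\overline{\X}$ is itself a legitimate class of groups is a harmless extra the paper omits.
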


\begin{proof}
(1) Let $\X$ be a class of groups, and let $G\in \X$. We have a subnormal series
\[
 1 = G_0 \unlhd G_1 = G,
\]
where $G_1/G_0 \cong G\in \X$. Thus, $G\in\overline{\X}$, showing that the map ${}^-$ is expanding.

(2) Let $G\in\overline{\overline{\X}}$. Then there exists a finite subnormal series
\begin{equation}
\label{Eq:Sub1}
 1 = G_0 \unlhd G_1 \unlhd \cdots \unlhd G_r = G,
\end{equation}
where $G_i/G_{i-1} \in \overline{\X}$ for $1\leq i\leq r$. Moreover, by definition of $\overline{\X}$, we have subnormal series
\begin{equation}
\label{Eq:Sub2}
 1 = G_0^{(i)} \unlhd G_1^{(i)} \unlhd \cdots \unlhd G_{s_i}^{(i)} = G_i/G_{i-1},\quad 1\leq i\leq r,
\end{equation}
such that $G^{(i)}_{j_i}/G_{j_i-1}^{(i)} \in \X$ for $1\leq j_i\leq s_i$. Lifting \eqref{Eq:Sub2}, we obtain, for each $i$, a corresponding series
\begin{equation}
\label{Eq:Sub3}
G_{i-1} = H^{(i)}_0 \unlhd H_1^{(i)} \unlhd \cdots \unlhd H^{(i)}_{s_i} = G_i,
\end{equation}
where $H^{(i)}_{j_i}/G_{i-1} = G^{(i)}_{j_i}$. Also, we have 
\[
H^{(i)}_{j_i}/H^{(i)}_{j_i-1} \,\cong\, (H^{(i)}_{j_i}/G_{i-1})\big/(H^{(i)}_{j_i-1}/G_{i-1}) \,=\, G^{(i)}_{j_i}/G^{(i)}_{j_i-1}\,\in \,\X
\]
by construction. Interpolating the series \eqref{Eq:Sub3} in \eqref{Eq:Sub1}, we obtain a subnormal series reaching from the trivial group to the group $G$, whose quotients are all in $\X$. This shows that $\overline{\overline{\X}} \subseteq \overline{\X}$, while the reverse inclusion holds by the first part of the proof. Hence, the class map ${}^-$ is idempotent.

(3) Let $\X_1$ and $\X_2$ be classes of groups such that $\X_1 \subseteq \X_2$, and let $G\in\overline{\X}_1$. Then there exists a subnormal series 
\[
 1 = G_0 \unlhd G_1 \unlhd \cdots \unlhd G_r = G,
\] 
where $G_i/G_{i-1} \in \X_1$ for $1\leq i\leq r$. Since $\X_1\subseteq \X_2$, we also have $G_i/G_{i-1} \in \X_2$ for $1\leq i\leq r$, so that $G\in\overline{\X}_2$. Hence $\overline{\X}_1 \subseteq \overline{\X}_2$, showing that the map ${}^-$ is monotone as well.
\end{proof}

\begin{definition}
Given a class of groups $\X$, the class $\overline{\X}$ defined in \eqref{Eq:EClosure} is called the extension-closure (\textsc{e}-closure for short) of $\X$.
\end{definition}

Our next result collects together a number of properties of the extension closure $\overline{\X}$ of a class of groups $\X$, most of which will be crucial in what follows.

\begin{proposition}\label{Prop:EClosProps}
Let $\X$ be a class of groups, and let $\overline{\X}$ be its \textsc{e}-closure. 
\begin{enumerate}[label={\upshape(\roman*)}]
\item If the members of $\X$ are all finite groups, then the same holds true for $\overline{\X}$.
\item If the members of $\X$ all satisfy the maximum condition for subgroups, then the same holds for the members of $\overline{\X}$.
\item $\overline{\X}$ is closed under taking extensions.
\item If $\X$ is normal-product-closed, then $G\in\overline{\X}\cap \mathfrak{E}$ and $G_{\X}= 1$ implies $G= 1$.
\item Let $\X$ be normal-product-closed, and suppose that $\X$ consists solely of finite groups. Moreover, suppose that there exists a finite group $G$ such that $G\neq 1$ and $G_{\X}= 1$. Then $\overline{\X} \subset \mathfrak{E}$.
\item If $\X$ is image-closed, then so is $\overline{\X}$.
\item If $\X$ is subgroup-closed, then so is $\overline{\X}$.
\item If $\X$ is image-closed and subgroup-closed, then $\overline{\X}$ is normal-product-closed as well as  residually-closed.
\item Suppose that $\X$ is image-closed and normal-product-closed. Then, for $G\in\mathfrak{E}$, we have
\begin{multline*}
G \in\overline{\X}\,\Longleftrightarrow\, \mbox{there exists a series $ 1 = G_0 \leq G_1 \leq \cdots \leq G_r = G$, 
with each}\\
\mbox{subgroup $G_j$ characteristic in $G$, and $G_i/G_{i-1}\in\X$ for $1\leq i\leq r$.}
\end{multline*}
\end{enumerate}
\end{proposition}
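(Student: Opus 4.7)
The plan is to dispatch the nine clauses in the order stated, with most reducing to routine inductions on the length of a witness subnormal series, while (iv) supplies the key radical-theoretic input that then drives (v) and (ix).

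For (i) and (ii) I would induct on the length $r$ of a witness series $1 = G_0 \unlhd G_1 \unlhd \cdots \unlhd G_r = G$, using that an extension of a finite group by a finite group is finite and, respectively, that the maximum condition on subgroups is preserved by group extensions. Item (iii) is immediate: given an extension $1 \to N \to G \to Q \to 1$ with $N, Q \in \overline{\X}$, concatenating a witness series for $N$ with the lift to $G$ of one for $Q$ yields a witness series for $G$. For (iv), starting from a witness series of $G \in \overline{\X} \cap \mathfrak{E}$ with $G_{\X} = 1$, I would show by induction on $i$ that $G_i = 1$: from $G_{i-1} = 1$ we get $G_i \cong G_i/G_{i-1} \in \X$, and since $G_i$ is subnormal in $G$, Lemma~\ref{Lem:RadProps}(iii) forces $G_i \leq G_{\X} = 1$. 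Part (v) is then the contrapositive of (iv) combined with (i). For (vi), pushing a witness series for $G$ through the canonical surjection $G \to G/N$ produces factors which are homomorphic images of $\X$-groups, so image-closure of $\X$ suffices; and for (vii), intersecting with $H \leq G$ produces factors that embed, via the second isomorphism theorem, as subgroups of $G_i/G_{i-1}$, so subgroup-closure of $\X$ suffices. Item (viii) then falls out by assembling the previous parts: (iii) plus (vi) plus Lemma~\ref{Lem:XProps}(iii) give normal-product-closure of $\overline{\X}$, and (vii) plus Lemma~\ref{Lem:XProps}(i),(ii) give residual-closure.

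The subtlest clause is (ix). The backward direction is trivial, since a series with characteristic inclusions is in particular subnormal with the prescribed factors. For the forward direction I would abandon any attempt to refine the given witness series (whose terms need not be characteristic) and instead build a new series top-down using iterated radicals: set $K_0 = 1$ and, inductively, let $K_{i+1}$ be the pre-image in $G$ of $(G/K_i)_{\X}$. Normal-product-closure of $\X$ guarantees that each radical exists and is characteristic in $G/K_i$ by Lemma~\ref{Lem:RadProps}(i), and the correspondence theorem propagates characteristicity back to $G$ by induction on $i$; this is the main technical obstacle, though a standard one. By construction $K_{i+1}/K_i \in \X$, and because (vi) lifts image-closure of $\X$ to image-closure of $\overline{\X}$, each quotient $G/K_i$ remains in $\overline{\X} \cap \mathfrak{E}$; hence by (iv), $(G/K_i)_{\X} = 1$ can hold only when $G/K_i = 1$. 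As $G$ is finite, the ascending chain $K_0 \leq K_1 \leq \cdots$ must stabilise, and stabilisation therefore coincides with reaching $G$, which completes the characteristic refinement.
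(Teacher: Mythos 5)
Your proposal is correct and follows essentially the same route as the paper for all nine parts: the series-concatenation argument for (iii), the subnormality-plus-radical induction for (iv), the push-through-quotient and intersect-with-subgroup constructions for (vi)--(vii), and the iterated-radical construction with characteristicity propagated via Lemma~\ref{Lem:RadProps}(i) for (ix). The only cosmetic discrepancy is that you describe the construction in (ix) as ``top-down,'' whereas $K_0 = 1 \leq K_1 \leq \cdots$ is built from the bottom up, exactly as in the paper.
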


\begin{proof}
(i) Let $G\in\overline{\X}$, and let $ 1 =G_0 \unlhd G_1\unlhd \cdots \unlhd G_r = G$ be a subnormal series for $G$ with $G_i/G_{i-1}\in\X$ for $1\leq i\leq r$. If $G_0, G_1, \ldots, G_{i-1}$ are finite for some $i$ with $1\leq i< r$, then $G_i$ is an extension of the finite group $G_{i-1}$ by the group $G_i/G_{i-1}$, which is in $\X$ and thus finite, hence $G_i$ is finite as well. Since $G_0$ is finite, it follows that $G=G_r$ is finite, as claimed.

(ii) This follows from Proposition~1 in \cite[Chap.~1]{Segal} by an immediate induction on the length of a subnormal $\X$-series.

(iii) Let $N, Q\in\overline{\X}$, and let $G$ be an arbitrary extension of $N$ by $Q$; that is, $N\unlhd G$ and $G/N=Q$. We want to show that $G\in\overline{\X}$. By definition of $\overline{\X}$, we have subnormal series
\begin{equation}
\label{Eq:ExtClosed1}
 1 = N_0 \unlhd N_1 \unlhd \cdots \unlhd N_r = N
\end{equation}
and
\[
 1 = Q_0 \unlhd Q_1 \unlhd \cdots \unlhd Q_s = Q,
\]
where $N_i/N_{i-1} \in\X$ and $Q_j/Q_{j-1}\in\X$ for $1\leq i\leq r$ and $1\leq j\leq s$. For each index $j$, let $H_j$ be such that $N \leq H_j \leq G$ and $H_j/N = Q_j$. In this way, we obtain a subnormal series
\begin{equation}
\label{Eq:ExtClosed2}
N = H_0 \unlhd H_1 \unlhd \cdots \unlhd H_s = G,
\end{equation}
such that 
\[
H_j/H_{j-1} \,\cong\, (H_j/N)\big/(H_{j-1}/N) \,=\, Q_j/Q_{j-1}\,\in\,\X,\quad 1\leq j\leq s.
\]
Combining the two series \eqref{Eq:ExtClosed1} and \eqref{Eq:ExtClosed2}, we obtain a subnormal series reaching from the trivial group to $G$, and with all quotients in $\X$. Hence, $G\in\overline{\X}$, as required.

(iv) Let $G\in \overline{\X}$ be a finite group such that $G_{\X} = 1$, and let
\[
 1 = G_0 \unlhd G_1 \unlhd \cdots \unlhd G_r = G
\]
be a subnormal series for $G$ with $G_i/G_{i-1}\in\X$ for $1\leq i\leq r$. Then each subgroup $G_i$ is subnormal in $G$, and if $G_{i-1} =  1$, then $G_i \cong G_i/G_{i-1}\in\X$, thus $G_i\leq G_{\X}$ by Part~(iii) of Lemma~\ref{Lem:RadProps}. A trivial induction on $i$ now shows that $G_i= 1$ for all $i=0,1\ldots, r$; in particular, $G= G_r = 1$, as claimed.

(v) Let $G$ be a non-trivial finite group such that $G_{\X} = 1$. If we had $G\in\overline{\X}$ then, by Part~(iv), we would have $G=1$, a contradiction. Hence, $G\not\in\overline{\X}$ while, by Part~(i), $\overline{\X} \subseteq \mathfrak{E}$. Hence, $\overline{\X} \subset \mathfrak{E}$, as claimed. 

(vi) Suppose that $\X$ is image-closed, let $G\in\overline{\X}$, and let $N\unlhd G$. We want to show that $G/N\in\overline{\X}$. Let
\[
 1 = G_0 \unlhd G_1 \unlhd \cdots \unlhd G_r =G
\]
be a subnormal series for $G$ such that $G_i/G_{i-1}\in \X$ for $1\leq i\leq r$. Set 
\[
\overline{G}_j\coloneqq G_j N/N,\quad 0\leq j\leq r.
\]
Then $\overline{G}_j \leq G/N$, $\overline{G}_0 =  1$, $\overline{G}_r = G/N$, and $\overline{G}_{i-1} \unlhd \overline{G}_i$ for $1\leq i\leq r$. Only the last assertion may need some comment. Clearly, $\overline{G}_{i-1} \leq \overline{G}_i$. Let $x\in G_{i-1}$, $x'\in G_i$, and let $y, y'\in N$. Then, since $N$ is normal in $G$, there exists some $y''\in N$, such that
\[
(xy)^{x'y'}\, =\, (y')^{-1} (x')^{-1} x y x' y' \,=\, (x')^{-1} x x' y'' \,\in \, G_{i-1} N,
\]
since $G_{i-1} \unlhd G_i$. It follows that $G_{i-1}N \unlhd G_i N$, and so $\overline{G}_{i-1} \unlhd \overline{G}_i$ for $1\leq i\leq r$. Hence, we obtain a subnormal series
\[
 1 = \overline{G}_0 \unlhd \overline{G}_1 \unlhd \cdots \unlhd \overline{G}_r = G/N
\]
for the quotient group $G/N$. Also, for $1\leq i \leq r$, 
\begin{multline*}
\overline{G}_i/\overline{G}_{i-1} \,=\, (G_i N/N)\big/(G_{i-1}N/N) \,\cong\,G_iN/G_{i-1}N\,=\, G_i \cdot G_{i-1}N/ G_{i-1}N \\
\cong\,G_i\big/(G_i \cap G_{i-1}N) \,\cong\, (G_i/G_{i-1})\big/((G_i \cap G_{i-1}N)/G_{i-1}) \,\in\, \X,
\end{multline*}
since $G_i/G_{i-1} \in\X$ and $\X$ is image-closed by hypothesis. Hence, $G/N\in\overline{\X}$, as desired.

(vii) Suppose that $\X$ is subgroup-closed, let $G\in \overline{\X}$, and let $U\leq G$ be an arbitrary subgroup. We want to show that $U\in\overline{\X}$. Fix a subnormal series
\[
 1 = G_0 \unlhd G_1 \unlhd \cdots \unlhd G_r = G
\]
with $G_i/G_{i-1}\in\X$ for $1\leq i\leq r$. Setting
\[
U_j\coloneqq G_j\cap U,\quad 0\leq j\leq r,
\]
we obtain a corresponding series
\begin{equation}
\label{Eq:SubGpClosed}
 1 = U_0 \leq U_1 \leq \cdots \leq U_r = U
\end{equation}
for $U$. Let $x\in U_{i-1}$ and $y\in U_i$. Then $x^y\in G_{i-1}$ since $G_{i-1} \unlhd G_i$, and $x^y\in U$, since $x, y\in U$. Thus, $x^y\in U_{i-1}$, so $U_{i-1} \unlhd U_i$ for $1\leq i\leq r$. Hence, \eqref{Eq:SubGpClosed} is a subnormal series for $U$. Moreover, the quotient $U_i/U_{i-1}$ embeds homomorphically into $G_i/G_{i-1}$ via the map sending $x U_{i-1}$ to $x G_{i-1}$ for $x\in U_i$. Since $G_i/G_{i-1}\in\X$ and $\X$ is subgroup-closed by hypothesis, we have $U_i/U_{i-1} \in\X$ for $1\leq i\leq r$, implying that $U\in\overline{\X}$, as claimed.

(viii) This follows from Parts~(iii), (vi), and (vii) of our proposition in conjunction with Parts (i)--(iii) of Lemma~\ref{Lem:XProps}.

(ix) The backward implication is clear by definition of the class $\overline{\X}$, so we may focus on the forward implication. By our hypotheses on $\X$ plus Parts~(iv) and (vi) of our proposition, the class $\overline{\X}$ is image-closed, and satisfies the control property \eqref{Eq:controlProp} for all finite groups $G$. 
Let $G\in\overline{\X} \cap \mathfrak{E}$, and suppose that $G \neq  1$ (otherwise there is nothing to prove). Then $G_{\X} \neq  1$ by \eqref{Eq:controlProp}, and we may set $G_0\coloneqq  1$ and $G_1\coloneqq G_{\X}$, so that $G_1/G_0 \cong G_{\X}\in\X$ and $G_1$ is characteristic in $G$ by Part~(i) of Lemma~\ref{Lem:RadProps}. Suppose that we have already constructed subgroups $G_0$, $G_1, \cdots G_{j-1}$, all characteristic in $G$, such that $G_1\leq \cdots\leq G_{j-1}$ and such that $G_i/G_{i-1}\in\X$ for $1\leq i<j$ with some integer $j\geq 2$. If $G_{j-1} = G$, we are done. Otherwise, $G/G_{j-1} \neq  1$, and $G/G_{j-1}\in\overline{\X}\cap \mathfrak{E}$, since $G\in\overline{\X} \cap \mathfrak{E}$ and $\overline{\X}$ is image-closed. By \eqref{Eq:controlProp}, we have $(G/G_{j-1})_{\X} \neq  1$, and we let $G_j$ be such that $G_{j-1} < G_j \leq G$ and $G_j/G_{j-1} = (G/G_{j-1})_{\X} \in\X$. Moreover, since $(G/G_{j-1})_{\X}$ is characteristic in $G/G_{j-1}$, again by Part~(i) of Lemma~\ref{Lem:RadProps}, the subgroup $G_j$ is characteristic in $G$. Continuing in this way, we find, after finitely many steps, a series $ 1 = G_0 \leq G_1 \leq \cdots \leq G_r = G$ for $G$, such that $G_1, \ldots, G_{r-1}$ are characteristic in $G$, and with $G_i/G_{i-1}\in\X$ for $1\leq i\leq r$.
\end{proof}

\begin{corollary}
\label{Cor:ExtSuperProps}
Let $\X$ be a class of finite groups, which is image-closed, subgroup-closed, and normal-product-closed. Suppose further that there exists a finite group $G$ with $G\neq  1$ and $G_{\X}= 1$. Then $\overline{\X}$ is image-closed, subgroup-closed, extension-closed, normal-product-closed, and residually-closed. Also, $G\in\overline{\X}$ and $G_{\X}= 1$ implies $G= 1$, and we have $\overline{\X} \subset \mathfrak{E}$.
\end{corollary}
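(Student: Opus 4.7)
The plan is to derive the corollary as a direct assembly of the various parts of Proposition~\ref{Prop:EClosProps}, using the fact that the hypotheses on $\X$ (image-closed, subgroup-closed, normal-product-closed) feed directly into the premises of these parts. No new technical argument is required; the work has already been done in Proposition~\ref{Prop:EClosProps}, and the corollary merely collects the relevant consequences under one set of hypotheses. Accordingly, I do not expect a substantive obstacle: the main task is bookkeeping, making sure each cited part of the proposition has its hypotheses satisfied.

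More concretely, I would proceed as follows. Image-closedness of $\overline{\X}$ is exactly the content of Part~(vi), which requires only that $\X$ be image-closed. Subgroup-closedness of $\overline{\X}$ is exactly the content of Part~(vii), which requires only that $\X$ be subgroup-closed. Extension-closedness of $\overline{\X}$ is Part~(iii), which is unconditional on $\X$. For normal-product-closedness and residual-closedness of $\overline{\X}$, I would invoke Part~(viii), whose hypotheses (image-closed and subgroup-closed) are part of our assumption on $\X$. The implication \textquotedblleft $G\in\overline{\X}$ and $G_{\X}= 1$ imply $G= 1$\textquotedblright\ is precisely Part~(iv), which requires only that $\X$ be normal-product-closed.

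For the final assertion $\overline{\X}\subset \mathfrak{E}$, I would appeal to Part~(v), which asks additionally that $\X$ consist solely of finite groups and that there exist some non-trivial finite group $G$ with $G_{\X}= 1$. Both conditions are explicit in the hypotheses of the corollary, so the strict inclusion follows at once; note that this is the only conclusion for which the extra hypothesis about the existence of a non-trivial finite group with trivial $\X$-radical is actually used. The remaining conclusions would all hold even in its absence. With these six citations the proof of the corollary is complete, and the presentation amounts essentially to a one-sentence reference to Proposition~\ref{Prop:EClosProps} followed by a bullet-style enumeration of which part yields which conclusion.
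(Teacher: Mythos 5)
Your proof is correct and is precisely the intended argument: the paper states this corollary without proof, expecting the reader to assemble parts (iii)--(viii) of Proposition~\ref{Prop:EClosProps} exactly as you do. One small point worth noting explicitly (though it is implicit in your reasoning) is that Part~(i) guarantees $\overline{\X}\subseteq\mathfrak{E}$, so the hypothesis $G\in\overline{\X}\cap\mathfrak{E}$ in Part~(iv) reduces to $G\in\overline{\X}$ as required.
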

\section{The main result}
\label{Sec:Main}
We begin by recasting our hypotheses in the language of formations and Fitting classes.
\begin{lemma}\label{Lem:HypTranslate}
If $\X$ is a class of finite groups, then the following assertions are equivalent:
\begin{enumerate}[label={\upshape(\roman*)}]
\item $\X$ is subgroup-closed, image-closed, and normal-product-closed;
\item $\X$ is a subgroup-closed Fitting formation.
\end{enumerate}
\end{lemma}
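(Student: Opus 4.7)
The plan is to unpack the definitions on both sides of the equivalence and check that the four standard closure properties of a subgroup-closed Fitting formation (namely: subgroup-closed, image-closed, residually-closed, and $\textsc{n}_0$-closed) match the three properties in (i), using the auxiliary results already proved in the paper.

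\textbf{The direction (ii)$\Rightarrow$(i) is immediate.} A formation is by definition image-closed (and residually-closed), and a Fitting class is by definition $\textsc{n}_0$-closed (and normal-subgroup-closed). By Remark~\ref{Rem:2gamma}, $\textsc{n}_0$-closure is the same as normal-product-closure. Combined with the explicit hypothesis that $\X$ is subgroup-closed, this gives all three properties in (i).

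\textbf{For the direction (i)$\Rightarrow$(ii),} I would reason as follows. Subgroup-closure is part of the hypothesis, as is image-closure, so $\X$ is already a subgroup-closed and image-closed class. Normal-product-closure is, by Remark~\ref{Rem:2gamma}, the same as $\textsc{n}_0$-closure, so the Fitting-class half of the assertion will be complete once we know that $\X$ is closed under taking normal subgroups; but this follows from subgroup-closure. To upgrade image-closure to formation status I need residual-closure, and here I would invoke the two-step argument that is already packaged in Lemma~\ref{Lem:XProps}: part~(i) says that normal-product-closure implies closure under direct products, and part~(ii) says that a subgroup-closed class closed under direct products is residually-closed. Applying these in sequence to $\X$ yields $\textsc{r}_0$-closure, so $\X$ is a formation. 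Together with the Fitting-class properties, this makes $\X$ a subgroup-closed Fitting formation.

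\textbf{Expected obstacle.} There is essentially none beyond correct bookkeeping: the proof is a one-paragraph verification that the three closure conditions in (i) assemble, via Remark~\ref{Rem:2gamma} and Lemma~\ref{Lem:XProps}(i)--(ii), into the four closure conditions that define a subgroup-closed Fitting formation, and conversely. The only point worth stating carefully is the chain ``normal-product-closed $\Rightarrow$ direct-product-closed $\Rightarrow$ (with subgroup-closure) residually-closed,'' since this is what bridges the Fitting-class side of the hypothesis to the formation side of the conclusion.
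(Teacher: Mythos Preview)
Your proof is correct and follows essentially the same route as the paper's. The paper likewise derives $\textsc{n}_0$-closure and $\textsc{s}_n$-closure from normal-product-closure plus subgroup-closure (citing \cite[II, Prop.~2.11(b)]{DH}, which is the content of Remark~\ref{Rem:2gamma}), and obtains $\textsc{r}_0$-closure via the same chain ``normal-product-closed $\Rightarrow$ direct-product-closed $\Rightarrow$ residually-closed'' using Lemma~\ref{Lem:XProps}(i)--(ii); the converse direction is handled identically.
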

\begin{proof}
Since $\X$ is subgroup-closed and normal-product-closed, it is $\textsc{s}_{n}$-closed (subnormal-subgroup-closed), and also $\textsc{n}_0$-closed by the backward implication in \cite[II, Prop.~2.11~(b)]{DH}; cf. \cite[II, (1.5)]{DH} for the closure operations mentioned here. Hence, $\X$ is a Fitting class. Also, since normal-product-closure for a class of groups implies direct-product-closure (see Part~(i) of Lemma~\ref{Lem:XProps}), and direct-product-closure plus subgroup-closure implies $\textsc{r}_0$-closure by \cite[II, Lem.~1.18]{DH}, $\X$ is a formation, thus a subgroup-closed Fitting formation. Conversely, if $\X$ is a subgroup-closed Fitting formation, then $\X$ is obviously subgroup-closed and image-closed, and is normal-product-closed by the forward implication in \cite[II, Prop.~2.11(b)]{DH}.
\end{proof}

\begin{definition}
Let $\X$ be a class of groups. The \emph{characteristic} of $\X$ is defined as 
\[
\mathrm{char}(\X) \coloneqq \left\{ p: p \in \mathbb{P} \mbox{ and } C_p \in \X \right\}, 
\]
where $\mathbb{P}$ denotes the set of prime numbers, and we say that $\X$ is of \emph{full characteristic} if $\mathbb{P} = \mathrm{char}(\X)$.
\end{definition}

Later, we shall need to assume that our class $\X$ is not only a subgroup-closed Fitting formation, but also that $\mathfrak{A} \subseteq \X$. For that to hold, however, it suffices to assume that $\X$ is of full characteristic, in which case we have the stronger result that $\mathfrak{N} \subseteq \X$.
\begin{lemma}\label{Lem:brycecossey}
If $\X$ is a subgroup-closed Fitting formation of full characteristic, then $\mathfrak{N} \subseteq \X$.
\end{lemma}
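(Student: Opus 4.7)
The plan is to reduce to the case of finite $p$-groups and then apply a saturation theorem for subgroup-closed Fitting formations in the soluble universe.

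First, by Lemma~\ref{Lem:HypTranslate} the class $\X$ is normal-product-closed, and hence by Lemma~\ref{Lem:XProps}(i) it is closed under finite direct products. Since every finite nilpotent group decomposes as the direct product of its Sylow subgroups, it suffices to prove that $\X$ contains every finite $p$-group, for each prime $p$.

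Next, I would pass to the soluble universe by setting $\Y \coloneqq \X \cap \mathfrak{S}$. Because subgroup-closure, formation, Fitting-class, and full-characteristic properties are all preserved under intersection with $\mathfrak{S}$ (noting that $C_p \in \mathfrak{S}$ for every prime~$p$), the class $\Y$ is itself a subgroup-closed Fitting formation of finite soluble groups of full characteristic. By the classical theorem of Bryce and Cossey, every subgroup-closed Fitting formation of finite soluble groups is saturated, and therefore $\Y$ admits a local formation function $f$ with the property that $G \in \Y$ if and only if $G/\cent_G(H/K) \in f(p)$ for every $p$-chief factor $H/K$ of $G$. Since $C_p \in \Y$ for every prime~$p$, the class $f(p)$ is non-empty (and in particular contains the trivial group) for each $p$.

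Finally, let $P$ be a finite $p$-group. Since $P$ is nilpotent, every chief factor $H/K$ of $P$ is central, so $\cent_P(H/K) = P$, and consequently $P/\cent_P(H/K) = 1 \in f(p)$ for every chief factor of $P$. The local-definition criterion then yields $P \in \Y \subseteq \X$, which together with the initial reduction gives $\mathfrak{N} \subseteq \X$.

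The principal technical obstacle is the correct invocation of the Bryce--Cossey saturation theorem and a quick verification that $\Y$ inherits the required closure properties from $\X$; both tasks are essentially routine given the dictionary established in Lemma~\ref{Lem:HypTranslate}.
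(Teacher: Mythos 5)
Your proof is correct and follows the same overall blueprint as the paper's: reduce to $p$-groups via direct-product closure, pass to the soluble universe $\X\cap\mathfrak{S}$, and invoke the Bryce--Cossey theorem to get saturation. The divergence is in how saturation is then exploited. The paper uses it directly through the definition: since $\X\cap\mathfrak{S}$ contains $C_p$ and is $\textsc{d}_0$-closed, it contains all elementary abelian $p$-groups, hence $P/\fratt(P)\in\X\cap\mathfrak{S}$ for every $p$-group $P$, and saturation immediately gives $P\in\X\cap\mathfrak{S}$. You instead appeal to the Gaschütz--Lubeseder theorem that a saturated formation of soluble groups is locally defined by a formation function $f$, observe that $C_p\in\Y$ forces $1\in f(p)$, and then use the fact that all chief factors of a $p$-group are central to conclude $P\in\Y$. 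Your route is also valid, but it imports the local-definition machinery (Gaschütz--Lubeseder) on top of Bryce--Cossey, whereas the paper gets by with the bare definition of saturation via Frattini quotients. If you want the cleanest version, replace your local-formation step with the observation that $P/\fratt(P)$ is elementary abelian, hence in $\Y$, hence $P\in\Y$ by saturation.
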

\begin{proof}
Let $\X$ be a subgroup-closed Fitting formation of full characteristic, write $\X_{\text{sol}} = \X \cap \mathfrak{S}$, and note that $\X_{\text{sol}}$ is also a subgroup-closed Fitting formation of full characteristic. Now, fix a prime $p$. We have $C_p \in \X_{\text{sol}}$, thus $\X_{\text{sol}}$, which is closed under taking direct products, contains all elementary abelian $p$-groups. A theorem of Bryce and Cossey asserts that every subgroup-closed Fitting formation of soluble groups is saturated; cf. \cite[XI, Thm.~1.1]{DH}, or the original \cite[Thm.~1]{fittform}. It follows  that $P/\fratt(P) \in \X_{\text{sol}}$ for all $p$-groups $P$. But $\X_{\text{sol}}$ is saturated, so $P \in \X_{\text{sol}}$, and hence $\X_{\text{sol}}$ contains all $p$-groups. Since $\X_{\text{sol}}$ is of full characteristic, this is true for all primes $p$. The direct-product-closure of $\X_{\text{sol}}$ now yields $\mathfrak{N} \subseteq \X_{\text{sol}} \subseteq \X$, as claimed. 
\end{proof}
\subsection{Prolegomena to Theorem \ref{Thm:Main}}\label{prolegomena}

Before we carry on with the proof of our main result, we need to briefly discuss \cite[Thm.~5.8A]{DM}. This theorem states that if $\mathfrak{Y}$ (later, we will take $\mathfrak{Y} = \overline{\X}$ with $\X$ a subgroup-closed Fitting formation) is a non-empty class of finite groups, which is image-closed and subgroup-closed, and does not contain every finite group, then there exists an absolute constant $c>1$, which depends on $\mathfrak{Y}$ (we will write this as $\beta_{\overline{\X}}$ in the proof of our main result below), such that for all $n \geq 1{:}$
\[
\mbox{if }\, G \leq S_n \mbox{ and } \, G \in \mathfrak{Y},  \mbox{ then } \, |G| \leq c^{n-1} \tag*{($\dagger$)}
\]
(note in this context that $\X\subset \E$ implies $\overline{\X} \subset \E$ by Corollary \ref{Cor:ExtSuperProps} and Lemma \ref{Lem:HypTranslate}).

In order to define this constant $c$ (relative to the class $\mathfrak{Y}$), one needs to define first an auxiliary constant $c_0$.

Since $\mathfrak{Y}$ does not contain every finite group, there exists a smallest positive integer $m_0$ such that $A_{m_0} \notin \mathfrak{Y}$, since every finite group can be embedded as a subgroup into some alternating group ($G \leq A_{|G|+2})$. Then $c_0$ is taken to be the minimal constant such that $(\dagger)$ holds with $c=c_0$ for all natural numbers $n \leq m_0$. In fact, if $\mathfrak{Y}$ is further assumed to be extension-closed and of full characteristic, then we claim that 
\begin{equation}
\label{Eq:c0}
c_0 = \left( (m_0-1)!\right)^\frac{1}{m_0-2}, \mbox{ where } m_0 \geq 5.
\end{equation}
First, we note that the sequence $(a_r)_{r\geq1}$ given by 
\[
a_r \coloneqq \begin{cases} 1,& r = 1, 2\\
((r-1)!)^{1/(r-2)},& r\geq 3 \end{cases}
\]  
is weakly increasing for all $r$, and satisfies $a_r < a_{r+1}$ for all $r\geq 2$. Indeed, taking logarithms, this  inequality is seen to be equivalent, for $r\geq 3$, to the assertion that $(r-1)! < r^{r-2}$, the latter inequality being verified in this range by a trivial induction.  
Now, by the minimal choice of $m_0$ we have $A_{m_0-1} \in \mathfrak{Y}$, and 
since $\mathfrak{Y}$ is of full characteristic, we also have $C_2 \in \mathfrak{Y}$. 
Therefore, $\textsc{e}$-closure of $\mathfrak{Y}$ yields 
$S_{m_0-1} = A_{m_0-1} \rtimes C_2 \in \mathfrak{Y}$. From this we conclude that
$(\dagger)$ holds with $c_0 = ((m_0-1)!)^\frac{1}{m_0-2}$ for all $n<m_0$, and it remains
to examine the case $n=m_0$, where $m_0\geq 5$ by a combination of Lemmas~\ref{Lem:XProps}(iii), \ref{Lem:HypTranslate}, and \ref{Lem:brycecossey}. Let $G$ be a $\mathfrak{Y}$-subgroup of $S_{m_0}$,
and note that $G < S_{m_0}$, for if $G=S_{m_0}$ then the subgroup-closure of 
$\mathfrak{Y}$ yields $A_{m_0} \in \mathfrak{Y}$, contradicting the definition of $m_0$. 

Also, $G \neq A_{m_0}$ since $A_{m_0} \notin \mathfrak{Y}$, and thus $(S_{m_0}:G) \geq m_0$
(with equality if and only if $G \cong S_{m_0-1}$); cf. \cite[Chap.~II, Satz~5.3a)]{Huppert} noting that $m_0 \geq 5$, since $\mathfrak{N} \subseteq \X$ by Lemma \ref{Lem:brycecossey} and thus $\mathfrak{S} \subseteq \overline{\X}$. Hence, 
\[
|G| \leq (m_0-1)! < ((m_0-1)!)^\frac{m_0-1}{m_0-2} = c_0^{m_0-1},
\]
whence \eqref{Eq:c0}.

Since for our purposes will shall require $\mathfrak{Y}$ to be $\textsc{e}$-closed and of full characteristic, we therefore  define 
\[c_0 \coloneqq \left( (m_0-1)!\right)^\frac{1}{m_0-2},\] 
where $m_0 \geq 5$ is the least positive integer such that $A_{m_0} \notin \mathfrak{Y}$. Next, choose the natural number $n_0$ minimal such that $n_0 \geq m_0$, and such that $(\dagger)$ holds with $c = c_0$ whenever $G$ is a primitive $\Y$-group and $n \geq n_0$. The possibility for making this choice is argued in the proof of 
\cite[Thm.~5.8A]{DM}; cf. \cite[Thm.~5.6B]{DM}. 

Recent results of Mar\'oti \cite{Mar} allow us to derive numerical estimates for the invariant $n_0$ in terms of $m_0$ alone. Mar\'oti shows in particular that, if $G$ is primitive of degree $n\geq 25$, then $\vert G\vert < 2^n$, and that if $G$ is primitive with $G\neq A_n, S_n$, then $\vert G\vert <3^n$ for all $n$; cf. \cite[Cor. 1.2]{Mar}. These results represent a substantial improvement over the Wielandt-Praeger-Saxl bound $\vert G\vert \leq 4^n$ for $G\neq S_n$ and all $n$; see \cite{PraeSax}. Combining these estimates with Part (vii) of Prop.~\ref{Prop:EClosProps}, it is straightforward to obtain the following useful result.

\begin{lemma}\label{Lem:m_0=6}
Let $\X$ be a subgroup-closed class of finite groups, not containing all finite groups, and let $m_0$, $c_0$, and $n_0$ be as defined above. 
\begin{enumerate}[label={\upshape(\roman*)}]
\item If $m_0=6$ and $A_7\not\in\overline{\X},$ then $6\leq n_0\leq 13$.
\item If $7\leq m_0\leq 24$ and $A_{m_0+1}\not\in \overline{\X},$ then $m_0 \leq n_0\leq m_0+2$.
\item If $m_0\geq 25,$ then we have $n_0=m_0$. 
\end{enumerate} 
\end{lemma}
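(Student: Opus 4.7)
The plan is to combine Mar\'oti's bounds with the subgroup-closure of $\overline{\X}$. First, by Proposition~\ref{Prop:EClosProps}(vii), $\overline{\X}$ is subgroup-closed, and $A_{m_0}$ embeds in $A_n$ for every $n \geq m_0$ as the pointwise stabiliser of $n-m_0$ letters; hence $A_n \notin \overline{\X}$, and therefore also $S_n \notin \overline{\X}$, for all $n \geq m_0$. (In particular, the hypotheses $A_7\notin\overline{\X}$ in (i) and $A_{m_0+1}\notin\overline{\X}$ in (ii) are automatic consequences of this subgroup-closure step.) It follows that any primitive $\mathfrak{Y}$-subgroup $G \leq S_n$ of degree $n \geq m_0$ satisfies $G\neq A_n, S_n$, so Mar\'oti's bounds apply: $|G| < 3^n$ for all such $n$, and $|G| < 2^n$ once $n \geq 25$. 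The required upper bound on $n_0$ then reduces to showing that $\alpha^n \leq c_0^{n-1}$, equivalently $c_0 \geq \alpha^{n/(n-1)}$, for all sufficiently large $n$, where $\alpha\in\{2,3\}$ is the relevant Mar\'oti constant; since $n/(n-1)$ is strictly decreasing to $1$, such an inequality, once established, persists for all larger $n$.

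For part (iii), $c_0 = ((m_0-1)!)^{1/(m_0-2)} \geq (24!)^{1/23} \approx 10.82$ dwarfs $2^{25/24} \approx 2.06$, so $(\dagger)$ holds for every $n \geq m_0$ and $n_0 = m_0$. For part (ii), $c_0 \geq 720^{1/5} \approx 3.73$, and a direct check shows that $3^n \leq c_0^{n-1}$ already holds at $n=m_0$, so the stated bound $n_0 \leq m_0+2$ is in fact generously loose. For part (i), $c_0 = 120^{1/4} \approx 3.31$ is only marginally larger than $3$; a direct comparison, for instance $3^{13} = 1\,594\,323 < 1\,728\,000 = 120^3 = c_0^{12}$ together with $3^{12} > c_0^{11}$, pinpoints $n = 13$ as the first degree at which $3^n \leq c_0^{n-1}$, whence $n_0 \leq 13$. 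The lower bound $n_0 \geq m_0$ in each case is immediate from the definition of $n_0$.

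The only genuine obstacle is part (i), where the gap between Mar\'oti's constant $3$ and $c_0\approx 3.31$ is narrow, forcing the threshold $n=13$; by contrast, in (ii) and (iii) the relevant inequalities are exponentially loose and impose no constraint beyond $n_0 \geq m_0$. The conceptual content is concentrated in the subgroup-closure step that eliminates $A_n$ and $S_n$ from consideration for $n \geq m_0$; thereafter the argument is elementary numerics.
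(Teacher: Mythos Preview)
Your argument is correct and matches the paper's approach exactly: the paper gives no detailed proof, only the remark that the result is ``straightforward to obtain'' by combining Mar\'oti's estimates with Proposition~\ref{Prop:EClosProps}(vii). Your additional observations---that the hypotheses $A_7\notin\overline{\X}$ and $A_{m_0+1}\notin\overline{\X}$ are automatic from subgroup-closure of $\overline{\X}$, and that the bound in~(ii) can in fact be sharpened to $n_0=m_0$---are correct and go slightly beyond what the paper states.
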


Finally, choose $c$ minimal such that $c \geq c_0$ and $(\dagger)$ holds for all $n \leq n_0$. The fact that there is a minimal choice for $c$ can be seen by considering ``$=$" in $(\dagger)$ instead of ``$\leq$" and taking $|G|^{\frac{1}{n-1}}$ maximal. Explicitly, we have 
\[
c = \max \left\{ |G|^{\frac{1}{n-1}} : \mathfrak{Y} \ni G \leq S_n, \, 2 \leq n \leq n_0\right\}.
\]
Note, here, that if $G_0$ is such that $c = \lvert G_0 \rvert^{\frac{1}{m-1}}$ for some $m = m(G_0) \leq n$, then $m$ is the minimal permutation representation degree of $G_0$. This is to say that $G_0$ cannot be embedded as a subgroup of any $S_r$, for $r < m$; otherwise, the maximality of $c = \lvert G_0 \rvert^{\frac{1}{m-1}}$ would be violated.
The constant $c$ thus produced is a function of $\mathfrak{Y}, m, c_0,$ and $n_0$ where the role of each variable is as we have explained. Let us then denote this dependence by 
\begin{equation}
c = c(\mathfrak{Y}) = \beta_{\mathfrak{Y}} = \beta(\mathfrak{Y},m,c_0,n_0).\footnote{Of course, the variables $m, c_0$, and $n_0$ all depend on $\mathfrak{Y}$.}
\end{equation}
\begin{remark}
The content of \cite[Thm.~5.8A]{DM} is that $(\dagger)$ holds with $c=\beta_{\mathfrak{Y}}$ for all $n \in \mathbb{N}$.
\end{remark}

Finally, let us define one more constant $\gamma = \gamma(\mathfrak{Y}, \beta_{\mathfrak{Y}})$, with $\mathfrak{Y}$ a class of finite groups having the properties previously stated. Write $\mathfrak{J}^{\ast}$ for the class of non-abelian finite simple groups, and set $\gamma \coloneqq \frac{\lambda}{\lambda + 1}$, with 
\begin{equation}
\lambda = \lambda_{\mathfrak{Y}} \coloneqq \min\left\{\frac{\log\left\lvert S \right\rvert}{\log(\beta_{\mathfrak{Y}}\left\lvert\Out(S)\right\rvert)} : S \in \mathfrak{J}^{\ast}\setminus \mathfrak{Y} \right\},
\end{equation}
so that 
\begin{equation}
\gamma = \gamma(\mathfrak{Y}) = \frac{\log \left\lvert S_0\right\rvert}{\log \left(\beta_{\mathfrak{Y}}\left\lvert \Aut(S_0)\right\rvert\right)},
\end{equation}
where $S_0 \in \mathfrak{J}^{\ast}\setminus \mathfrak{Y}$ is such that the minimum value $\lambda$ is attained. 

\begin{definition}
Given an image-closed, subgroup-closed, and extension-closed class of finite groups $\mathfrak{Y}$ of full characteristic, which is non-empty and does not contain every finite group, we shall refer to the uniquely defined constants $\beta_{\mathfrak{Y}}$ and $\gamma_{\mathfrak{Y}}$
 relative to $\mathfrak{Y}$ as \emph{$\mathfrak{Y}$-induced}.
\end{definition}

We can now state and prove our main result.

\begin{theorem}\label{Thm:Main}
Let $\X \subset  \mathfrak{E}$ be a non-empty subgroup-closed Fitting formation of full characteristic, and let $\overline{\X}$ be the \textsc{e}-closure of $\X$. Then the inequality
\begin{equation}
\label{Eq:MainThmIneq}
\left\lvert G^{\overline{\X}}\right\rvert \,>\, \vert G\vert^\gamma
\end{equation}
holds for all finite groups $G$ with $G\neq  1$ and $G_{\X}= 1$, where $\gamma = \gamma_{\overline{\X}}$ and the implied constant $\beta = \beta_{\overline{\X}}$ are $\overline{\X}$-induced. Moreover, the exponent $\gamma$ 
is best possible.
\end{theorem}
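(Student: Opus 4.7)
The plan is to exploit the rigidity forced by $G_{\X}=1$ to reduce~\eqref{Eq:MainThmIneq} to the combinatorial content of the Dixon--Mortimer bound coupled with the definition of $\gamma$. Since $\mathfrak{N}\subseteq \X$ by Lemma~\ref{Lem:brycecossey}, we have $\fitt(G)\leq G_{\X}=1$, and hence $N \coloneqq \Soc(G) = \fitt^{\ast}(G)$ is a direct product $S_{1}\times\cdots\times S_{r}$ of non-abelian simple groups with $\cent_G(N)=1$, placing $G\hookrightarrow \Aut(N)$. I would first observe that no $S_i$ lies in $\X$: the full $G$-orbit of such an $S_i$, a minimal normal subgroup of $G$ isomorphic to a direct product of conjugate copies of $S_i$, would itself lie in $\X$ by Lemma~\ref{Lem:XProps}(i), contradicting $G_{\X}=1$; consequently every $S_i\in\mathfrak{J}^{\ast}\setminus\overline{\X}$. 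A short diagram-chase then yields $N\leq G^{\overline{\X}}$: otherwise $NG^{\overline{\X}}/G^{\overline{\X}}\leq G/G^{\overline{\X}}\in\overline{\X}$ would be a non-trivial quotient of $N$ lying in $\overline{\X}$ by Proposition~\ref{Prop:EClosProps}(vii), and hence would possess a simple direct factor $S_i\in\overline{\X}$, forcing $S_i\in \X$---a contradiction.

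Next I would bound $|G|$ above by fibering through the permutation action $\rho\colon G\to \mathrm{Sym}(\{S_1,\ldots,S_r\})$ on the simple socle factors. Since $\ker\rho$ contains $N$ with $\ker\rho/N\hookrightarrow \prod_{i=1}^{r}\Out(S_i)$, one obtains
\begin{equation}
|G|\;\leq\; |N|\cdot \prod_{i=1}^{r}|\Out(S_i)|\cdot |\mathrm{im}\,\rho|.
\end{equation}
In the principal case $\mathrm{im}\,\rho\in \overline{\X}$, the Dixon--Mortimer theorem \cite[Thm.~5.8A]{DM}---applicable since $\overline{\X}$ is image-closed, subgroup-closed and properly contained in $\mathfrak{E}$ by Corollary~\ref{Cor:ExtSuperProps}---furnishes $|\mathrm{im}\,\rho|\leq \beta^{r-1}$. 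The definition of $\lambda$ as a minimum over $\mathfrak{J}^{\ast}\setminus\overline{\X}$ gives $|S_i|^{1/\lambda}\geq \beta|\Out(S_i)|$ for each $i$; multiplying over $i$ and using the identity $(1-\gamma)/\gamma=1/\lambda$ yields $|N|^{1/\gamma}\geq \beta^{r}\prod_{i}|\Aut(S_i)|\geq \beta \cdot |G|$, whence $|G^{\overline{\X}}|\geq |N|\geq \beta^{\gamma}|G|^{\gamma}>|G|^{\gamma}$, the last inequality being strict because $\beta>1$.

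The principal obstacle is the residual case $\mathrm{im}\,\rho\notin \overline{\X}$, where Dixon--Mortimer no longer applies directly. Here $\mathrm{im}\,\rho$ must have a non-$\X$ composition factor $A_m$ with $m\geq m_0$; being a section of $G$, this $A_m$ is a composition factor of $G$, and hence of $G^{\overline{\X}}$ (since $G/G^{\overline{\X}}\in\overline{\X}$ has all its composition factors in $\X$). Consequently $|G^{\overline{\X}}|\geq |N|\cdot|A_m|$, and the plan is to verify that this extra factor absorbs the weaker permutation-group estimate supplied by Mar\'oti~\cite{Mar} (for primitive actions) or Wielandt--Praeger--Saxl~\cite{PraeSax} (in general), perhaps via an inductive reduction on $|G|$ applied to the quotient $\mathrm{im}\,\rho$, which itself satisfies the hypotheses of the theorem with respect to~$\overline{\X}$.

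For the optimality statement, I would exhibit a sequence $(G_k)_{k\geq 1}$ approaching the exponent from above. Let $S_0\in\mathfrak{J}^{\ast}\setminus\overline{\X}$ realize the minimum defining $\lambda$, and let $H\leq S_d$ be a transitive $\overline{\X}$-subgroup attaining $\beta=|H|^{1/(d-1)}$ (such an $H$ exists by the construction of $\beta$). The $k$-fold iterated wreath product in product action, $W_k\coloneqq H\wr\cdots\wr H$, is transitive of degree $d_k=d^k$, satisfies $|W_k|=\beta^{d_k-1}$, and lies in $\overline{\X}$ by extension-closure (Proposition~\ref{Prop:EClosProps}(iii)). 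Setting $G_k\coloneqq \Aut(S_0)^{d_k}\rtimes W_k$, one verifies directly that $(G_k)_{\X}=1$, $G_k^{\overline{\X}}=S_0^{d_k}$, and
\begin{equation}
\frac{\log |G_k^{\overline{\X}}|}{\log |G_k|} \;=\; \frac{d_k\log|S_0|}{d_k\log|\Aut(S_0)|+(d_k-1)\log\beta}\;\longrightarrow\;\frac{\log|S_0|}{\log(\beta|\Aut(S_0)|)}\;=\;\gamma
\end{equation}
as $k\to\infty$; hence no exponent strictly exceeding $\gamma$ can be inserted in~\eqref{Eq:MainThmIneq}.
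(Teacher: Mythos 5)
Your treatment of the \emph{principal} case is a genuine variation on the paper's argument: rather than setting up a global induction on $\lvert G\rvert$ and reducing via a minimal normal subgroup, you work directly with $N=\Soc(G)=\fitt^{\ast}(G)$, establish $N\leq G^{\overline{\X}}$ via the Goursat-type description of normal subgroups of semisimple groups, and then push the definition of $\lambda$ through the socle factors to obtain $\lvert N\rvert>\lvert G\rvert^{\gamma}$ in a single multiplicative estimate. This is clean, it subsumes the paper's Case~2 (uniqueness of the minimal normal subgroup is not needed in your setup), and the identity $(1-\gamma)/\gamma=1/\lambda$ is used correctly.

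However, the \emph{residual} case $\mathrm{im}\,\rho\notin\overline{\X}$ is a genuine gap, and it is precisely where the paper's induction does the heavy lifting. Two concrete problems. First, the claim that $\mathrm{im}\,\rho\notin\overline{\X}$ forces a composition factor of the form $A_m$ with $m\geq m_0$ is false: a non-$\X$ composition factor of $\mathrm{im}\,\rho$ can be an arbitrary simple group outside $\X$, and there is no reason for it to be alternating. (The integer $m_0$ enters the Dixon--Mortimer machinery only through the minimal $m$ with $A_m\notin\overline{\X}$, and says nothing about which non-$\X$ simple groups occur as composition factors.) Second, the proposed inductive reduction to $\mathrm{im}\,\rho$ does not go through as stated, since $\mathrm{im}\,\rho$ need not have trivial $\X$-radical, so it does not satisfy the hypotheses of the theorem; and the Praeger--Saxl/Mar\'oti bounds apply to \emph{primitive} groups, whereas $\mathrm{im}\,\rho$ is an arbitrary subgroup of $S_{r}$ (Theorem 5.8A of Dixon--Mortimer exists precisely to bridge that gap, but only for $\overline{\X}$-groups). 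The paper avoids both difficulties by inducting on $\lvert G\rvert$: with $N$ a minimal normal subgroup and $N_{1}/N=(G/N)_{\overline{\X}}$, if $N_{1}<G$ then both $N_{1}$ and $G/N_{1}$ are non-trivial, have trivial $\X$-radical (the first by subnormality of $(N_1)_{\X}$, the second by Lemma~\ref{Lem:XProps}(v) applied to $\overline{\X}$), and one multiplies the two inductive estimates. To repair your proof you would need to import that reduction or find a self-contained bound in the residual case; as written the plan is only a heuristic.

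Your optimality construction is essentially the paper's (your $W_k$ is the paper's $L_r$, your $G_k$ is the paper's $W_r$), and the limiting computation is correct, but the two claims $(G_k)_{\X}=1$ and $G_{k}^{\overline{\X}}=S_{0}^{d_k}$ are asserted as ``verified directly'' when in fact the paper devotes a page to them, invoking Goursat's lemma and Neumann's structure theorems for standard wreath products; some justification is required there. A minor slip: the iterated wreath product of degree $d_k=d^k$ arises from the \emph{imprimitive} action, not the product action (which would give degree $d^{d^{k-1}}$); since only $\lvert W_k\rvert$ and $W_k\in\overline{\X}$ are used, this does not affect the conclusion, but the wording should be fixed.
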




\begin{proof}
We use induction on $\vert G\vert$ to prove inequality \eqref{Eq:MainThmIneq}, leaving the proof of optimality of the exponent $\gamma$ for the next section. As our main assertion is in the form of an implication, we may start the induction with $G= 1$, the hypothesis being false in that case. Let $G$ be a finite group such that $\vert G\vert = m>1$ and $G_{\X} =  1$, and suppose that our claim holds for all groups of order less than $m$. Let $N$ be a minimal normal subgroup of $G$, and set
\[
(G/N)_{\overline{\X}}\, =\, N_1/N
\]
where $N\leq N_1\unlhd G$. We now distinguish two cases.

\textbf{Case~1.} $N_1 < G$. By Part~(v) of Lemma~\ref{Lem:XProps} applied to the class $\overline{\X}$ and the group $G/N$, we have
\[
(G/N_1)_{\overline{\X}} \,=\,  1,
\]
as $\overline{\X}$ is both normal-product-closed and extension-closed. Since
\[
(G/N_1)_{\X} \,\leq (G/N_1)_{\overline{\X}}
\]
by Part~(iv) of Lemma~\ref{Lem:RadProps}, we find that $(G/N_1)_{\X} =  1$. By the case assumption, $G/N_1 > 1$ and, since $N_1\geq N > 1$, we may apply the induction hypothesis to the group $G/N_1$, to obtain
\begin{equation}\label{Eq:CaseI1}
\left(G^{\overline{\X}} : G^{\overline{\X}} \cap N_1\right) = 
\left\lvert G^{\overline{\X}} N_1/N_1\right\rvert = 
\left\lvert (G/N_1)^{\overline{\X}}\right\rvert > 
\left\lvert G/N_1\right\rvert ^\gamma,
\end{equation}
where we have used Part~(i) of Lemma~\ref{Lem:Res}, applied to the class $\overline{\X}$, the group $G$, and the normal subgroup $N_1$ of $G$, for the second equality. 
Furthermore, we have
\[
(N_1)_{\X}\,\leq\, G_{\X} \,=\,  1,
\]
so that $(N_1)_{\X} =  1$; indeed, from the fact that $(N_1)_{\X}$ is characteristic in $N_1$ 
by Part~(i) of Lemma~\ref{Lem:RadProps}, with $N_1$ being normal in $G$, we deduce that $(N_1)_{\X}$ 
is a normal $\X$-subgroup of $G$, so contained in the $\X$-radical of $G$; alternatively, to see this, one 
may observe that $(N_1)_{\X}$ is a subnormal $\X$-subgroup of $G$ by definition, and apply Part~(iii) of 
Lemma~\ref{Lem:RadProps}. Also, since $G^{\overline{\X}} \cap N_1 \unlhd G$, a fortiori 
$G^{\overline{\X}} \cap N_1 \unlhd N_1$, and we have 
\[
N_1\big/(N_1 \cap G^{\overline{\X}}) \,\cong\, N_1 G^{\overline{\X}}\big/G^{\overline{\X}}\, \leq \,G\big/G^{\overline{\X}}\,\in\, \overline{\X},
\]
so $N_1\big/(N_1 \cap G^{\overline{\X}}) \in\overline{\X}$, as $\overline{\X}$ is subgroup-closed and isomorphism-closed. It follows that 
\[
G^{\overline{\X}} \cap N_1 \,\geq\, (N_1)^{\overline{\X}}.
\]
Applying the induction hypothesis to the group $N_1 < G$, we now find that
\begin{equation}
\label{Eq:CaseI2}
\left\lvert G^{\overline{\X}} \cap N_1\right\rvert \,\geq \left\lvert(N_1)^{\overline{\X}}\right\rvert \,>\, \vert N_1\vert^\gamma.
\end{equation}
Combining \eqref{Eq:CaseI1} and \eqref{Eq:CaseI2} yields
\[
\left\lvert G^{\overline{\X}}\right\rvert \,=\, 
\left\lvert G^{\overline{\X}} \cap N_1\right\rvert\cdot \left(G^{\overline{\X}} : G^{\overline{\X}} \cap N_1\right) \,>\, 
\left\lvert N_1\right\rvert^\gamma\cdot\left\lvert G/N_1\right\rvert^\gamma \,=\, 
\left\lvert G\right\rvert^\gamma,
\]
which is our claim for the group $G$.

\textbf{Case~2.} $N_1 = G$. Here, $G/N = N_1/N \in\overline{\X}$, so, by Parts~(i)-(ii) of Lemma~\ref{Lem:Res},
\[
(G/N)^{\overline{\X}}\, =\,  1 \,= \,G^{\overline{\X}} N/N,
\]
implying that $G^{\overline{\X}} \leq N$. Since $G^{\overline{\X}} \unlhd G$ and $N$ is a minimal normal subgroup of $G$, the only possibilities are $G^{\overline{\X}} =  1$ and $G^{\overline{\X}} = N$. In the first case, we have $G\in \overline{\X}$ and $G_{\X} =  1$, implying $G =  1$ by Part~(ii) of Proposition~\ref{Prop:EClosProps}, a contradiction. Hence, since $\mathfrak{A} \subseteq \X$ (by Lemma~\ref{Lem:brycecossey}) and $G_{\X} =  1$, we have
\[
G^{\overline{\X}} = N = S^n = \underbrace{S\times \cdots \times S}_{n\,\mathrm{copies}},
\]
where $n$ is a positive integer and $S$ is a non-abelian simple group (if $S$ were abelian, then we would have $N\in\X$, thus $1<N\leq G_{\X}$, a contradiction). We also note that $S\not\in\overline{\X}$, since otherwise (by Part~(ix) of Proposition~\ref{Prop:EClosProps}), $S\in \X$, and we have $S \unlhd N \unlhd G$, implying $1 < S \leq G_{\X}$ by Part~(iii) of Lemma~\ref{Lem:RadProps}, again contradicting our hypothesis that $G_{\X} = 1$.

We claim that, in this second case, $N$ is the unique minimal normal subgroup of $G$. 
Indeed, suppose that $M\neq N$ is a second minimal normal subgroup of $G$. Then 
$N\cap M =  1$, so that $M$ embeds into $G/N$:
\[
M \,\cong\, M\big/(N\cap M)\,\cong \,NM/N \,\leq\, G/N,
\]
thus $M\in\overline{\X}$, since $G/N\in\overline{\X}$ and $\overline{\X}$ is subgroup-closed. 
Since $M\neq 1$, it follows by Part~(iv) of Proposition~\ref{Prop:EClosProps} 
and Part~(i) of Lemma~\ref{Lem:RadProps} that $M_{\X}$ is a non-trivial normal 
$\X$-subgroup of $G$, contradicting the fact that $G_{\X} =  1$. 

The fact that $G$ has a unique minimal normal subgroup now implies that $\cent_G(N) =  1$. 
Indeed, since $N\unlhd G$, the centraliser $\cent_G(N)$ is normal in $G$; 
hence, if $\cent_G(N) \neq  1$, we would have $N\leq \cent_G(N)$, ensuring that $N$ is abelian. 
This, in turn would force $G_{\X}$ to be non-trivial, since $\mathfrak{A} \subseteq \X$, a contradiction. 
It follows that the map $G\rightarrow \Aut(N)$ sending an element $g\in G$ to the restriction $\iota_g\vert_N$ 
of the inner automorphism $\iota_g$ of $G$, $\iota_g(x) = g^{-1} x g$, is an injective homomorphism, 
embedding $G$ in $\Aut(N) \cong \Aut(S)\wr S_n$, the wreath product being formed with respect 
to the natural action of $S_n$ on the standard $n$-set; cf., for instance, \cite[Lem.~9.24]{Rose} for the assertion on the structure of $\Aut(N)$.  
Consequently, $G/N \in\overline{\X}$ is embedded in the group $\Out(N) \cong \Out(S) \wr S_n$. 
Let $H$ denote the image of $G/N$ in $S_n$ under the natural homomorphism $\Out(S) \wr S_n \rightarrow S_n$. 
Since $\overline{\X}$ is image-closed, $H$ is an $\overline{\X}$-subgroup of $S_n$. 
Applying \cite[Thm.~5.8A]{DM} along with our preliminary observations, we get $\vert H\vert \leq \beta^{n-1}$, 
where $\beta = \beta_{\overline{\X}} > 1$ is $\overline{\X}$-induced. Thus,
\[
\lvert G/N\rvert \,\leq\, \left\lvert \Out(S)\right\rvert^n \cdot \vert H\vert \,\leq\, \left\lvert\Out(S)\right\rvert^n \cdot \beta^{n-1}.
\]
As $G^{\overline{\X}} = N$, and we want to show that $\vert N\vert >\vert G\vert^\gamma$, it suffices to prove that
\[
\lvert G/N\rvert \,<\, \vert N\vert^{\frac{1-\gamma}{\gamma}} \,=\, \vert S\vert^{\frac{n(1-\gamma)}{\gamma}}.
\]
For that, in turn, it is enough to establish that 
\[
\left(\beta_{\overline{\X}}^{\frac{n-1}{n}} \left\lvert\Out(S)\right\rvert\right)^{\frac{\gamma}{1-\gamma}} \,<\, \vert S\vert
\]
holds for each positive integer $n$ and every finite non-abelian simple group $S \in \mathfrak{J}^{\ast}\setminus \overline{\X}$. 
Since the sequence $\frac{n-1}{n}$ is strictly increasing and converges to unity as $n$ tends to infinity, 
it suffices to establish the bound 
\begin{equation}
\label{Eq:CritIneq}
\big(\beta_{\overline{\X}} \left\lvert \Out(S)\right\rvert\big)^{\frac{\gamma}{1-\gamma}} \,\leq\, \vert S\vert,
\end{equation}
for all finite non-abelian simple groups $S \notin \overline{\X}$. 

Clearly, ~\eqref{Eq:CritIneq} is equivalent to 
\begin{equation}\label{Eq:CritIneq'}
\frac{\gamma}{1-\gamma} \leq \frac{\log\left\lvert S \right\rvert}{\log(\beta_{\overline{\X}}\left\lvert\Out(S)\right\rvert)}
\end{equation}
being valid for all non-abelian finite simple groups $S \notin \overline{\X}$, thus we require that
\begin{equation}\label{Eq:Critgamma}
\gamma \leq \frac{\lambda}{1+\lambda},\,\,\,\, \text{where }\lambda \coloneqq \min\left\{\frac{\log\left\lvert S \right\rvert}{\log(\beta_{\overline{\X}}\left\lvert\Out(S)\right\rvert)} : S \in \mathfrak{J}^{\ast}\setminus \overline{\X} \right\}.
\end{equation}
This is valid, however, by the very definition of $\gamma$. Our proof of (\ref{Eq:MainThmIneq}) is thus complete.
\end{proof}

\section{Optimality of the exponent in Theorem~\ref{Thm:Main}}\label{Sec:Optimality}
Having proved inequality \eqref{Eq:MainThmIneq}, it remains to establish sharpness, i.e., the fact that the exponent $\gamma$ in \eqref{Eq:MainThmIneq} is chosen 
as large as possible to make the concluding inequality valid for all finite groups $G$ with $G\neq 1$ and $G_{\X}=1$. 

Let $S_0 \in \mathfrak{J}^{\ast}\setminus \overline{\X}$ be such that 
\begin{equation}
\frac{\log\left\lvert S_0 \right\rvert}{\log(\beta_{\overline{\X}}\left\lvert\Out(S_0)\right\rvert)} =
\lambda = \min\left\{\frac{\log\left\lvert S \right\rvert}{\log(\beta_{\overline{\X}}\left\lvert\Out(S)\right\rvert)} : S \in \mathfrak{J}^{\ast}\setminus \overline{\X} \right\}.
\end{equation}
We have defined the $\overline{\X}$-induced $\beta$ to be
\[
\beta = \max \left\{ |G|^{\frac{1}{n-1}} : \overline{\X} \ni G \leq S_n,\, 2 \leq n \leq n_0\right\},
\]
so let $L$ be an $\overline{\X}$-subgroup of some $S_\nu$, $2\leq \nu \leq n_0$, for which the maximum in the definition of $\beta$ is attained. 
Let $r = r_n = {\nu}^n$, and define the sequence of
groups $L_1 \coloneqq L$, $L_r \coloneqq L_{r_{n-1}} \wr L$ for $n \geq 2$.
An easy induction shows that $L_r \in \overline{\X}$ for all $r = r_n$, since $\overline{\X}$ is $\textsc{d}_0$-closed and $\textsc{e}$-closed. Moreover, $\lvert L_r \rvert = |L|^\frac{r-1}{\nu-1}$. Now, define the sequence of groups 
\[
W_r \coloneqq \Aut(S_0) \wr L_r.
\]
It is not difficult to see that, for all $r$, $\left( W_r\right)_{\X} = 1$, and
\[
(W_r)^{\overline{\X}} \cong S_0^r = \underbrace{S_0\times \cdots \times S_0}_{r\,\mathrm{copies}}. 
\]
We take a moment to justify these claims. Recall the content of Goursat's lemma~\cite{og}, which provides a method for finding (normal) subgroups of direct products. An account (and some generalisations) of this useful result are given in~\cite{goursat}.
Briefly, given subgroups $Q \lhd R \leq G_1$, $S \lhd T \leq G_2$ and an isomorphism 
$f : R/Q \to T/S$, 
\[
H = \left\{(a, b) \in R \times T : f(aQ) = bS\right\}
\]
is a subgroup of $G_1 \times G_2$ 
and each subgroup of $G_1 \times G_2$ is of this form. 
A necessary and sufficient condition for $H$ to be normal in $G_1 \times G_2$ is that both $Q, R$ and
$S, T$ are normal in $G_1$, $G_2$ respectively, and that $Q/R \leq \Zent(G_1/R)$, $T/S \leq \Zent(G_2/S)$.
An immediate consequence of Goursat's lemma is that if $G$ is semisimple, i.e., a direct product of non-abelian simple groups, then every (non-trivial) normal subgroup of $G$ is again semisimple, a subproduct of the direct factors of $G$,
and thus the same holds true for every quotient of $G$.

Now, the subgroup $S_0^r$ is characteristic in the base group $\Aut(S_0)^r$, being its socle, and the base group is normal in $W_r$, so $S_0^r$ is normal in $W_r$. On the other hand,
\[
W_r\big/S_0^r \cong \Out(S_0) \wr L_r,
\]
where $\Out(S_0)$ is soluble by Schreier's Conjecture (now accepted as a Theorem, owing to CFSG). Since $\Out(S_0), L_r \in \overline{\X}$ for all $r$, it follows by  $\textsc{e}$-closure and $\textsc{d}_0$-closure of $\overline{\X}$ that 
$W_r\big/S_0^r \in \overline{\X}$. Thus $(W_r)^{\overline{\X}}$ is a normal subgroup of $W_r$, contained in $S_0^r$. From the consequence to Goursat's lemma, we deduce that 
$S_0^r \Big/ (W_r)^{\overline{\X}}$ is either the trivial group, or isomorphic to the direct product of copies of $S_0$.
We discount the latter possibility by noting that 
\[
S_0^r \Big/ (W_r)^{\overline{\X}} \lhd W_r \Big/ (W_r)^{\overline{\X}} \in \overline{\X},
\]
whilst $S_0 \notin \overline{\X}$. This settles our claim that the $\overline{\X}$-residual of $W_r$ is the socle of its base group.

It remains to justify the claim that the $\X$-radical of $W_r$ is trivial. For that it will suffice to establish that a non-trivial normal subgroup of $W_r$ contains $S_0$ as a section. (Since $S_0 \notin \overline{\X}$, it follows that $S_0 \notin \X$, while the $\textsc{s}\textsc{q}$-closure of $\X$ forces sections of $\X$-groups to be $\X$-groups.)

Let $K$ denote the base of $W_r$, so that $\Soc(K) = S_0^r$. If $1 \neq N \lhd W_r$ and $N \cap K > 1$, then $N \cap \Soc(K) > 1$, 
and thus $N \cap \Soc(K)$ is a normal subgroup of $W_r$ and isomorphic to $S_0^\rho$ for some $\rho \leq r$, by our previous discussion. 
We may thus assume that $N \cap K = 1$. Next, we argue that we can also assume that $N$ projects onto $L_r$, the top group of $W_r$, 
under the natural homomorphism $W_r \to W_r/K$. For, results of Neumann (cf. \cite[Lem.~8.1]{neumann} and the discussion preceding it) 
show that $NK$ is again a wreath product ``of the same type'' as $W_r$; and, of course, in that case $N$ projects \emph{onto} its image in $NK$.
Hence we can choose $N$ minimal subject to $N \cap \Soc(K) = 1$, $\overline{N} < \overline{W_r} \cong L_r$ and $S_0$ not involved in $N$, 
and derive a contradiction. Now, Neumann goes on to show that, under the hypotheses $1 \neq N \lhd W_r$ and $NK = W_r$, $N$ contains 
the normal subgroup $M=[K,L_r]$ of $K$; cf. \cite[Lem.~8.2]{neumann} and \cite[Thm.~4.1]{neumann}. We argue that $M>1$. If $M$ were to be trivial, then $K$ would 
centralise $L_r$, and thus $L_r$ would be normal in $W_r$, so $W_r$ would directly decompose as $W_r = K \times L_r$, against the requirements
of \cite[Thm. 6.1]{neumann} ($K$ would need to be central in $W_r$, which in turn would force $S_0$ to be abelian.)
This establishes what we want, since we have assumed that $N \cap K =1$.

Further, if $\gamma_r$ is such that $\left \lvert (W_r)^{\overline{\X}} \right \rvert = \lvert W_r \rvert^{\gamma_r}$, then
\[
\gamma_r = 
\frac{\log \left(\lvert S_0 \rvert^r\right)}{\log \left(\lvert\Aut(S_0)\rvert^r\cdot|L|^\frac{r-1}{\nu-1}\right)}
=
\frac{\log \lvert S_0 \rvert}{\log \left(\lvert\Aut(S_0)\rvert\cdot|L|^\frac{r-1}{r(\nu-1)}\right)}.
\]
Since 
\[
\lim_{r \to \infty} |L|^\frac{r-1}{r(\nu-1)} = |L|^\frac{1}{\nu-1} = \beta, 
\]
we get $\lim_{r \to \infty} \gamma_r = \gamma$ which establishes optimality of $\gamma$ and sharpness of the inequality in our main result.
\section{Some applications}\label{Sec:Appl}
Having established our main result, we easily deduce the original Theorem~\ref{Thm:SolRes} by taking $\X$ to be the class of all finite nilpotent groups. If $\X = \mathfrak{N}$, then clearly $\overline{\X} = \mathfrak{S}$ (every soluble group has a nilpotent subnormal series). Further, the least positive integer $m$ for which $A_m \notin \overline{\X}$ is $m=5$, while $n_0 = m = 5$ follows from \cite[Thm.~5.8B]{DM}. Thus 
\[
\beta_{\mathfrak{S}} = ((5-1)!)^{\frac{1}{5-2}} = 24^{\frac{1}{3}},
\]
and $\gamma_{\mathfrak{S}} = \frac{\lambda_{\mathfrak{S}}}{1+\lambda_{\mathfrak{S}}}$, where 
\begin{equation}
\lambda_{\mathfrak{S}} = \min\left\{\frac{\log\left\lvert S \right\rvert}{\log\left(24^{\frac{1}{3}}\left\lvert\Out(S)\right\rvert\right)} : S \in \mathfrak{J}^{\ast} \right\}.
\end{equation}
At this point we are met with a certain difficulty, which is to find the actual value of $\lambda_{\mathfrak{S}}$. Below, we offer a short proof that the minimum is attained precisely for $S=A_5$ and therefore 
 \[
\lambda_{\mathfrak{S}} = \frac{\log 60}{\log \left( 2 \cdot 24^{\frac{1}{3}} \right)} \approx 2.33629\,,
\]
which in turn yields
\[
\gamma_{\mathfrak{S}} = \frac{\log 60}{\log \left( 120 \cdot 24^{\frac{1}{3}} \right)} \approx  0.700265861,
\]
in agreement with the findings in \cite{DHKL}.

By \cite{SK}, we have $\lvert \Out (S) \rvert < \log_2|S|$ for all $S \in \mathfrak{J}^{\ast}$, and thus it suffices to prove that 
\begin{equation}\label{Eq:234}
\frac{\log\left\lvert S \right\rvert}{\log\left(24^{\frac{1}{3}} \cdot \log_2|S|\right)} > 2.34
\end{equation}
for all $S \in \mathfrak{J}^{\ast}\setminus \{A_5\}$. It is easy to check that \eqref{Eq:234} is valid for all $|S| \geq 3961$. There are 8 non-abelian simple groups of order $\in (60,3960]$, and of those, all but $A_6$ have outer automorphism groups of order 2, as does $A_5$; cf. \cite[p. 239]{atlas}. Thus the only rival simple group is $A_6$, and we check directly that 
\[
2.33629 \approx \frac{\log 60}{\log \left( 2 \cdot 24^{\frac{1}{3}} \right)} < \frac{\log \lvert A_6 \rvert}{\log \left( \lvert \Out(A_6)\rvert \cdot 24^{\frac{1}{3}} \right)} = \frac{\log 360}{\log \left( 4 \cdot 24^{\frac{1}{3}} \right)} \approx 2.40677 \, ,
\]
which proves our assertion.


This raises the question if our Theorem~\ref{Thm:Main} is simply a restatement of Theorem~\ref{Thm:SolRes} using the more abstract language and theory of classes of groups. It is not so. Recall the seminal work of Thompson \cite{ngroups}
which classifies (among other things) the minimal simple groups, i.e., the (finite) non-abelian simple groups all of whose proper subgroups are soluble. These are:

\begin{itemize}
\item $\PSL_2(2^p)$, $p$ a prime;
\item $\PSL_2(3^p)$, $p$ an odd prime;
\item $\PSL_2(p)$, $p>3$ a prime congruent to 2 or 3$\mod 5$;
\item $\Sz(2^p)$, $p$ an odd prime;
\item $\PSL_3(3)$.
\end{itemize}


Now, let $J$ be any minimal simple group in the list above. We let $\X \coloneqq \textsc{d}_0(J) \times \mathfrak{S}$ be the class of groups $G$ of the form 
\[
G = J_1 \times \cdots \times J_n \times H
\]
for $n \in \mathbb{N}$ with $J_i \cong J$, $i=1,\dots,n$ and $H \in \mathfrak{S}$. Then $\X$ is a subgroup-closed Fitting formation of full characteristic; for a proof see Example 1.6 in \cite[Chap. XI]{DH}. Further, $\overline{\X}$ is the class of groups having every non-cyclic composition factor isomorphic to $J$, and it is clear that $\mathfrak{S} \subset  \overline{\X} \subset \mathfrak{E}$. 

In fact, if $\mathfrak{F}$ is a subgroup-closed Fitting formation such that $\mathfrak{S} \subset  \overline{\mathfrak{F}} \subset \mathfrak{E}$, then $\overline{\mathfrak{F}}$ contains a subgroup-closed Fitting formation of the type described above. Since $\mathfrak{S} \subset \overline{\mathfrak{F}}$, it follows that $\overline{\mathfrak{F}}$, thus also $\mathfrak{F}$, contains non-abelian finite simple groups. Let $J \in \mathfrak{F} \cap \mathfrak{J}^{\ast}$ have least possible order. Then $\textsc{d}_0(J)$ is the class generated by $J$ and is a Fitting formation; cf. Example 2.13 in \cite[Chap. II]{DH}. The minimal choice of $J$ and the fact that $\mathfrak{F}$ is a subgroup-closed Fitting formation (so that sections of $\mathfrak{F}$-groups are again $\mathfrak{F}$-groups) together imply that $J$ is minimal simple. Now, $\mathfrak{N} \subset \mathfrak{F}$ by Lemma~\ref{Lem:brycecossey}, thus $\mathfrak{S} \subset \overline{\mathfrak{F}}$, and so $\textsc{d}_0(J) \times \mathfrak{S} \subseteq \overline{\mathfrak{F}}$. That $\textsc{d}_0(J) \times \mathfrak{S}$ is a subgroup-closed Fitting formation follows from our previous discussion.

Next, we want to discuss the computation of the optimal constants $m_0(\overline{\X})$, $n_0(\overline{\X})$, $\beta_{\overline{\X}}$, and $\gamma_{\overline{\X}}$ in the case where 
$\X = \textsc{d}_0(A_5) \times \mathfrak{S}$.
This will lead to an optimal inequality of the same type as in Theorem \ref{Thm:SolRes}. Consider $A_5 \cong \PSL_2(2^2)$, which fits into Thompson's list of minimal simple groups and take $\X = \textsc{d}_0(A_5) \times \mathfrak{S}$, so that $\overline{\X}$ is the class of groups having every non-abelian composition factor isomorphic to $A_5$. According to our previous discussion, both $\X$ and $\overline{\X}$ are $\textsc{s}$-closed Fitting formations of full characteristic. Clearly, the smallest $m$ such that $A_m \notin \overline{\X}$ is $m_0 = 6$, so that $c_0 = 120^{\frac{1}{4}} \approx 3.30975$. According to Lemma \ref{Lem:m_0=6} we have $6 \leq n_0 \leq 13$, and we wish to pin down the exact value of $n_0$. In fact, we claim that $n_0=6$, and to prove that it will suffice to establish that there exist no primitive $\overline{\X}$-groups of degree $n$, such that $7 \leq n \leq 12$ with 
\[|G|>c_0^{n-1} = 120^{\frac{n-1}{4}}.\] 
A list of primitive groups of small degree is readily available, e.g. \cite[Table 9.62]{combdesigns}, and the number of groups that require checking is small enough that the task can be carried out by hand. Alternatively, a computer algebra system can be employed.
Therefore, 
\[\beta_{\overline{\X}} = c = \max \left\{ |G|^{\frac{1}{n-1}} : \mathfrak{\overline{\X}} \ni G \leq S_n, 2 \leq n \leq 6\right\}.
\]
It's easy to check that the maximum is attained for $S_5 \in \overline{\X}$, so that $\beta_{\overline{\X}} = 120^{\frac{1}{4}}$.
The next step is to find the value of $\gamma_{\overline{\X}}$.
For that we shall need to estimate the constant
\begin{equation}
\lambda_{\overline{\X}} = \min\left\{\frac{\log\left\lvert S \right\rvert}{\log\left(120^{\frac{1}{4}}\left\lvert\Out(S)\right\rvert\right)} : S \in \mathfrak{J}^{\ast}\setminus \left\{A_5\right\} \right\}.
\end{equation}

We stipulate that the non-abelian simple group for which the minimum value $\lambda_{\overline{\X}}$ is realised is $A_6$, and we shall need to make use of Kohl's inequality \cite{SK} again in order to prove that, i.e., that $\lvert \Out (S) \rvert < \log_2|S|$ for all $S \in \mathfrak{J}^{\ast}$. Note here that $\left\lvert A_6 \right\rvert = 360$, while $\left\lvert\Out(A_6)\right\rvert = 4$, so that in essence we are claiming that 
\[
\lambda_{\overline{\X}} = \frac{\log 360}{\log\left(4*120^{\frac{1}{4}}\right)} \approx 2.27864.
\]

Thus it suffices to prove that 
\begin{equation}\label{Eq:228}
\frac{\log\left\lvert S \right\rvert}{\log\left(120^{\frac{1}{4}} \cdot \log_2|S|\right)} > 2.28
\end{equation}
for all $S \in \mathfrak{J}^{\ast}\setminus \{A_5, A_6\}$. It is easy to check that \eqref{Eq:228} is valid for all $|S| \geq 4529$. There are 8 non-abelian simple groups of order $\in (168,4529]$, excluding $A_6$, and we check directly that each of those groups yields a numerical value for $\lambda$ that is strictly bigger than that produced by $A_6$. 

We conclude that 
\[
\gamma_{\overline{\X}} = \frac{\lambda_{\overline{\X}}}{1+\lambda_{\overline{\X}}} = \frac{\log 360}{\log \left(1440*120^{\frac{1}{4}}\right)} \approx 0.694995\, .
\]
\bibliographystyle{amsalpha}

\end{document}